
\documentclass[12pt]{article}
\usepackage{amsmath,amssymb,amsthm,fullpage}

\title{The AGM Simple Pendulum}

\author{Mark B. Villarino\\
Depto.\ de Matem\'atica, Universidad de Costa Rica,\\
2060 San Jos\'e, Costa Rica}

\date{\today}

\theoremstyle{plain}
\newtheorem{thm}{Theorem}[section]    
\newtheorem{prop}[thm]{Proposition}   
\newtheorem{cor}[thm]{Corollary}      

\theoremstyle{definition}
\newtheorem{defn}[thm]{Definition}    
\newtheorem{exmp}[thm]{Example}       

\numberwithin{equation}{section}

\makeatletter
\def\section{\@startsection{section}{1}{\z@}{-3.5ex plus -1ex minus
			  -.2ex}{2.3ex plus .2ex}{\large\bf}}
\def\subsection{\@startsection{subsection}{2}{\z@}{-3.25ex plus -1ex
			  minus -.2ex}{1.5ex plus .2ex}{\normalsize\bf}}
\renewcommand{\@dotsep}{200} 
\makeatother



\renewcommand{\geq}{\geqslant}  
\renewcommand{\leq}{\leqslant}  


\newcommand{\hideqed}{\renewcommand{\qed}{}} 

\newcommand{\nth}{\ensuremath{n^{\mathrm{th}}}} 

\newcommand{\word}[1]{\quad\mbox{#1}\quad} 


\begin{document}

\maketitle

\begin{abstract}
We present a self-contained development of \textsc{Gauss}'
Arithmetic-Geometric Mean (AGM) and the work of the great british number theorist  \textsc{A. E. Ingham}
who obtained rigorous error bounds for the AGM's approximations to the
period of a simple pendulum. Moreover we discuss the relation of complex multiplication to the AGM.
\end{abstract}



\section{Introduction}

One of the most celebrated problems in the classical dynamics of
particles is the computation of the period of the simple pendulum. The
nonlinear differential equation which models the pendulum's motion
appears in numerous physical problems and the exact formula for the
period of a single oscillation is given by a complete elliptic
integral of the first kind. But, in 1834, \textsc{Joseph Liouville}
\cite{Liou} proved a justly famous theorem which implies that such an
integral cannot be evaluated by any finite combination of elementary
functions. Therefore, the calculation of the period must be carried
out by suitable approximative formulas. This had been recognized long
before, and in 1747, \textsc{Daniel Bernoulli} \cite{PT} published the
first such approximation. Since then an enormous literature has arisen
around the problem of finding a good approximation to the period and
research continues unabated to this very day.

The authors of these approximations show great dexterity and ingenuity
in their derivations and use a variety of techniques to obtain them.
However, virtually NONE of them offers a rigorous error analysis. That
is to say, inequalities on the upper bound for the error, which shows
how \emph{good} the approximation is, and on the lower bound for the
error, which shows how \emph{bad} the approximation is. (See
\textsc{Thurston} \cite{Thur}). Most of the authors do include
numerical studies of the accuracy of their approximations and some
even include a few order-of-magnitude asymptotics. But those with
rigorous error bounds are few and far between.

Recent interest has concentrated in \textsc{Gauss}'
Arithmetic-Geometric Mean (AGM) algorithm \cite{AGM} because of its
high rate of convergence. In 2008, \textsc{Claudio G. Carvalhaes} and
\textsc{Patrick Suppes} \cite{CS1} published a very interesting and
detailed presentation of the AGM and its application to the
approximation of the period. They also presented an elegant
interpretation of the AGM recurrence formula as a method of
\emph{renormalizing} the pendulum in the sense that it replaces the
original pendulum with another one with the same period, but longer
length and smaller amplitude. This interpretation was already known to
\textsc{Greenhill} \cite{Gre} in the late 1800's, who showed its deep relation to the modern theory of complex multiplication, but has been
woefully neglected till recently. However, their paper, too, fails to
offer any rigorous error analysis, although the numerical studies of
the error are extremely interesting and merit study.

It is unfortunate that none of the authors cites the marvelous
investigations of the great British number theorist \textsc{A. E.
Ingham} which \textsc{L. A. Pars} describes in his monumental 665-page
standard work \cite{Pars1}, which was published almost 50 years ago in
1965. \textsc{Ingham} not only obtains the formulas of
\textsc{Carvalhaes} and \textsc{Suppes} but also obtains rigorous
error estimates, both in excess and in defect. It is beyond question
that Ingham's work deserves to be better known.

So, our paper is organized as follows. To make it as self-contained as
possible, we develop \emph{ab initio} the theory of the AGM including
Gauss' original proof that it converges to the complete elliptic
integral of the first kind. It is difficult to find this anywhere,
today, since the clever method of \textsc{D. J. Newman} \cite{DJN} has
now become fashionable. Then we apply the AGM to the case of the
simple pendulum and we slightly alter the results and proofs of
\textsc{Ingham} so as to obtain a complete error analysis. Finally we
compare our analytical error bounds with the numerical studies of
\textsc{Carvalhaes} and \textsc{Suppes} and show that they virtually
coincide (as they should!). But, and this is the novelty of our paper, our analysis explains the \emph{why}
of their unexplained numerical results.  Our paper shows a beautiful interweaving of classical mechanics and pure mathematics.


\section{The AGM}

We use the Stockholm lectures of \textsc{Vladimir Tkachev} \cite{TK}
in our treatment of the AGM.



\begin{defn}
Let $a\geq 0$ and $b \geq 0$ be two numbers such that $a\geq b\geq 0$
and define the numbers $a_0$ and $b_0$ by
\begin{equation}
\label{a0b0}
a_0 := a,  \qquad  b_0 := b;
\end{equation}
then for $n = 0,1,2,\dots$ define the sequences $\{a_n\}$ and
$\{b_n\}$ by
\begin{equation}
\label{anbn}
a_{n+1} := \frac{a_n + b_n}{2},  \qquad   b_{n+1} := \sqrt{a_n b_n}\,.
\end{equation}
\end{defn}

Note that each $a_{n+1}$ is the \emph{arithmetic mean} of the previous
$a_n$ and $b_n$, while each $b_{n+1}$ is the \emph{geometric mean} of
those same two numbers.

\begin{defn}
One says that the sequences $\{a_n\}$ and $\{b_n\}$ in \eqref{a0b0}
and \eqref{anbn} define the \textbf{arithmetic-geometric mean
algorithm}, which we abbreviate as \textbf{AGM}.
\end{defn}


Now we collect some of the basic properties of the AGM.

\begin{prop}
The following properties of the AGM are valid.
\begin{enumerate}
\item 
The $a_n$'s decrease, the $b_n$'s increase and every $a_n$ is bigger
than every $b_m$. More precisely,
\begin{equation}
\label{abmon}
a_0 \geq a_1 \geq\cdots\geq a_n \geq a_{n+1} \geq\cdots\geq b_{n+1}
\geq b_n \geq\cdots\geq b_1 \geq b_0.
\end{equation}
\item 
\begin{equation}
\label{aminusb}
0 \leq a_n - b_n \leq \frac{a-b}{2^n} \,.
\end{equation}
\item 
The limits
\begin{equation}
\label{AB}
A := \lim_{n\to\infty} a_n  \word{and}  B := \lim_{n\to\infty} b_n
\end{equation}
both exist and they are equal,
\begin{equation}
\label{A=B}
A = B.
\end{equation}
\end{enumerate}
\end{prop}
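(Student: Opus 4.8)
The plan is to establish the three assertions in order, each bootstrapped from the preceding one, with the elementary arithmetic--geometric mean inequality as essentially the only input.

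\textbf{Step 1: the chain \eqref{abmon}.} The key fact is that if $x \geq y \geq 0$ then $\tfrac{x+y}{2} \geq \sqrt{xy}$, the difference being $\tfrac12(\sqrt{x}-\sqrt{y})^2 \geq 0$. I would first show by induction on $n$ that $a_n \geq b_n \geq 0$: the base case is the hypothesis $a_0 \geq b_0 \geq 0$, and if $a_n \geq b_n \geq 0$ then AM--GM applied to the pair $(a_n,b_n)$ yields exactly $a_{n+1} \geq b_{n+1} \geq 0$. Once $a_n \geq b_n$ holds for every $n$, monotonicity follows in one line each: $a_{n+1} = \tfrac{a_n+b_n}{2} \leq \tfrac{a_n+a_n}{2} = a_n$, so the $a_n$'s decrease, and $b_{n+1} = \sqrt{a_n b_n} \geq \sqrt{b_n b_n} = b_n$, so the $b_n$'s increase. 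For the cross comparison $a_n \geq b_m$ I would split into the cases $n \leq m$ (then $a_n \geq a_m \geq b_m$, since the $a$'s decrease) and $n \geq m$ (then $a_n \geq b_n \geq b_m$, since the $b$'s increase); stringing these inequalities together gives the full chain \eqref{abmon}.

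\textbf{Step 2: the bound \eqref{aminusb}.} The left inequality $0 \leq a_n - b_n$ is already contained in Step 1. For the right inequality, use $b_{n+1} \geq b_n$ from \eqref{abmon} to get
\[
0 \leq a_{n+1} - b_{n+1} \leq a_{n+1} - b_n = \frac{a_n + b_n}{2} - b_n = \frac{a_n - b_n}{2},
\]
and then apply this inequality $n$ times in succession, starting from $a_0 - b_0 = a - b$, to conclude $a_n - b_n \leq (a-b)/2^n$.

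\textbf{Step 3: the limits \eqref{AB} and the equality \eqref{A=B}.} By \eqref{abmon} the sequence $\{a_n\}$ is decreasing and bounded below (by $b_0$) while $\{b_n\}$ is increasing and bounded above (by $a_0$), so both limits $A$ and $B$ exist by the monotone convergence theorem. Passing to the limit in \eqref{aminusb} and using $2^{-n} \to 0$ forces $0 \leq A - B \leq 0$, hence $A = B$; equivalently, letting $n \to \infty$ in $a_{n+1} = \tfrac{a_n+b_n}{2}$ gives $A = \tfrac{A+B}{2}$. There is no substantial obstacle here — every displayed step is a one-line consequence of AM--GM or of the monotonicity just proved. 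The only points meriting a little care are carrying the stronger statement $a_n \geq b_n \geq 0$ (not merely $a_n \geq b_n$) through the induction in Step 1, so that AM--GM may legitimately be reapplied at each stage, and the small case split needed for the cross term $a_n \geq b_m$.
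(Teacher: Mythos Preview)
Your proof is correct and follows essentially the same route as the paper: AM--GM for the ordering $a_n \geq a_{n+1} \geq b_{n+1} \geq b_n$, the halving estimate $a_{n+1}-b_{n+1} \leq \tfrac12(a_n-b_n)$ via $b_{n+1}\geq b_n$, and monotone convergence plus the squeeze for $A=B$. You are in fact a bit more careful than the paper in explicitly carrying the nonnegativity through the induction and in handling the cross-term $a_n \geq b_m$ by a case split, but the argument is the same.
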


\begin{proof} \emph{Of \eqref{abmon}}:

Since the square of a real number is always non-negative, it follows
that for $n = 0,1,2,\dots$, $(\sqrt{a_n} - \sqrt{b_n})^2 \geq 0$ and
that there is strict inequality unless $a_n = b_n$, whence we conclude
that the following inequality is valid,
\begin{equation}
\label{agm}
\frac{a_n+b_n}{2} \geq \sqrt{a_nb_n} \,.
\end{equation}
Of course, \eqref{agm} is the famous \emph{arithmetic-geometric mean
inequality} for two numbers. Applying it to $a_{n+1}$ and $b_{n+1}$ we
obtain
\begin{equation}
a_{n+1} \geq b_{n+1}.
\end{equation}
Thus, from $a_{n+1} \geq b_{n+1}$ and $a_n \geq b_n$ we obtain
\begin{equation}
a_n \geq \frac{a_n+b_n}{2} =: a_{n+1} \geq b_{n+1} := \sqrt{a_nb_n}
\geq b_n,
\end{equation}
which is \eqref{abmon}.

\medskip

\emph{Of \eqref{aminusb}}:
\\
{}From $b_{n+1} \geq b_n$ we conclude
$$
a_{n+1} - b_{n+1} \leq a_{n+1} - b_n = \frac{a_n+b_n}{2} - b_n
= \frac{a_n-b_n}{2}
$$
and \eqref{aminusb} follows by induction.

\medskip

\emph{Of \eqref{AB} and \eqref{A=B}}:
\\
By \eqref{abmon} the sequence $\{a_n\}$ decreases monotonically and is
bounded from below by $b_0$, and so $A$ exists. By \eqref{abmon} the
sequence $\{b_n\}$ increases monotonically and is bounded from above
by $a_0$, and so $B$ exists.

Finally, letting $n$ tend to infinity in \eqref{aminusb} and using
\eqref{AB}, we obtain
$$
0 \leq A - B \leq 0
$$ 
and by the ``squeeze'' theorem, we conclude $A = B$.
\end{proof}


Now the following definition makes sense.

\begin{defn}
We define the \textbf{arithmetic-geometric mean}, $M(a,b) \equiv \mu$
of the numbers $a$ and $b$ to be the common limit
\begin{equation}
M(a,b) \equiv \mu := A := \lim_{n\to\infty} a_n
\equiv B := \lim_{n\to\infty} b_n
\end{equation}
of the AGM as applied to the numbers $a$ and $b$.
\end{defn}

\begin{prop}
The geometric mean $b_n$ is a \textbf{closer} approximation to $\mu$ than $a_n$; more precisely
\begin{equation}
\label{gmb}
0 < \frac{\mu - b_n}{a_n - \mu} < 1.
\end{equation}
\end{prop}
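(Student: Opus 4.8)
The plan is to reduce both halves of \eqref{gmb} to elementary consequences of the monotonicity chain \eqref{abmon}, after first upgrading that chain to strict inequalities. I would begin by disposing of the degenerate case: if $a=b$ then $a_n=b_n=\mu$ for every $n$ and the quotient in \eqref{gmb} is $0/0$, so the statement is to be understood in the case $a>b$, which I assume henceforth. Under this assumption the AGM inequality \eqref{agm} is strict at every stage (since $(\sqrt{a_n}-\sqrt{b_n})^2>0$ as soon as $a_n\neq b_n$), so $a_n>b_n$ for all $n$, and then the argument already given for \eqref{abmon} yields $a_n>a_{n+1}$ and $b_{n+1}>b_n$ strictly. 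Passing to the limit, a strictly decreasing sequence stays strictly above its limit and a strictly increasing one strictly below it, hence $b_n<\mu<a_n$ for every $n$. In particular $\mu-b_n>0$ and $a_n-\mu>0$, which gives the left-hand inequality $0<\dfrac{\mu-b_n}{a_n-\mu}$ and, incidentally, shows the quotient is well defined.

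For the right-hand inequality I would note that, since both $a_n-\mu>0$ and $a_n-b_n>0$, the inequality $\dfrac{\mu-b_n}{a_n-\mu}<1$ is equivalent to $\mu-b_n<a_n-\mu$, i.e.\ to $2\mu<a_n+b_n$. But by \eqref{anbn} one has $a_n+b_n=2a_{n+1}$, so this is exactly the assertion $\mu<a_{n+1}$, which is the strict inequality established in the previous paragraph applied to the index $n+1$. This completes the proof of \eqref{gmb}.

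There is essentially no hard step here: the only point that requires a little care is the promotion of the weak inequalities of \eqref{abmon} to strict ones, which is needed both to guarantee a nonzero denominator and to obtain the strict bound ``$<1$'' rather than merely ``$\leq 1$''. Everything else is the single algebraic identity $a_{n+1}=\tfrac{a_n+b_n}{2}$ together with the squeeze already used to prove $A=B$.
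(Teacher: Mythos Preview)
Your proof is correct and follows essentially the same route as the paper's: both hinge on the equivalence $\mu < a_{n+1} \iff 2\mu < a_n + b_n \iff \mu - b_n < a_n - \mu$, after which dividing by $a_n - \mu > 0$ finishes. The only difference is that you take more care than the paper does in promoting the weak inequalities of \eqref{abmon} to strict ones and in isolating the degenerate case $a=b$; the paper simply asserts $\mu < a_{n+1}$ and $0 < \mu - b_n$ without comment.
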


\begin{proof}
We observe
$$
\mu < a_{n+1} = \frac{a_n + b_n}{2} \iff 2\mu < a_n + b_n
\iff \mu - b_n < a_n - \mu.
$$
Since $0 < \mu - b_n < a_n - \mu$, we can divide by $a_n - \mu$ to
complete the proof.
\end{proof}


\section{Gauss' theorem on elliptic integrals}

The following theorem gives a hint of the depth of the mathematics
involved in the AGM. It is the only theorem Gauss published on the
algorithm and appears in a paper on secular variations (!) published in  1818  \cite{AGM}. But,
it seems that he already had a proof in 1799 \cite{Cox}:

\begin{thm}\label{Gauss}
Let $a$ and $b$ be positive real numbers. Then
\begin{equation}
\label{EI}
\boxed{ \frac{1}{M(a,b)} = \frac{2}{\pi} \int_0^{\frac{\pi}{2}}
\frac{d\phi}{\sqrt{a^2 \cos^2\phi + b^2 \sin^2\phi}} }\,.
\end{equation}
\end{thm}

The integral \eqref{EI} is a \emph{complete elliptic integral of the
first kind} and, as we have already seen \cite{Liou}, cannot be
evaluated in finite terms with elementary functions. In the next
section we will see its relationship to the simple pendulum.

Before we enter into the details of Gauss' proof, we introduce some
notation and separate out the fundamental technical step.

Let
\begin{equation}
\label{Iab}
I(a,b):=\int_0^{\frac{\pi}{2}} 
\frac{d\phi}{\sqrt{a^2 \cos^2\phi + b^2 \sin^2\phi}} \,.
\end{equation}
Then, we have to prove that 
\begin{equation}
I(a,b) = I(a_1,b_1) = I(a_2,b_2) = I(a_3,b_3) = \cdots
\end{equation}
since we can then conclude that
\begin{equation}
I(a,b) = \lim_{n\to\infty} I(a_n,b_n) = I(\mu,\mu) = \frac{\pi}{2\mu}
\end{equation}
which, after multiplying by $\dfrac{2}{\pi}$, is precisely~\eqref{EI}.  

In order to conclude
\begin{equation}
\label{Iab1}
\lim_{n\to\infty} I(a_n,b_n)
= I\bigl( \lim_{n\to\infty} a_n, \lim_{n\to\infty} b_n \bigr)
= I(\mu,\mu)
\end{equation}
we have to prove that \emph{we can interchange the limit and the
integral signs}. For this, it is sufficient to prove:

\begin{prop}
The sequence
$\biggl\{\dfrac{1}{\sqrt{a_n^2\cos^2\phi + b_n^2\sin^2\phi}}\biggr\}$,
$n = 0,1,2,\dots$, converges \textbf{uniformly} to $\dfrac{1}{\mu}$.
\end{prop}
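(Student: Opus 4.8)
The plan is to exhibit a single numerical sequence that bounds the sup-norm of the difference between the $n$-th term and the limiting constant $\tfrac1\mu$, and to show that this bounding sequence tends to $0$. First I would observe that, since $\cos^2\phi + \sin^2\phi = 1$ and $a_n \geq b_n$, we have the pinching
\begin{equation}
b_n \leq \sqrt{a_n^2\cos^2\phi + b_n^2\sin^2\phi} \leq a_n
\qquad\text{for all }\phi\in[0,\tfrac{\pi}{2}],
\end{equation}
uniformly in $\phi$. Taking reciprocals reverses the inequalities, so for every $\phi$ the $n$-th term of our sequence lies in the interval $[\tfrac1{a_n},\tfrac1{b_n}]$, and the limit $\tfrac1\mu$ lies there as well by Proposition statement \eqref{abmon} (since $b_n \leq \mu \leq a_n$). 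Hence the distance from the $n$-th term to $\tfrac1\mu$ is at most the length of that interval:
\begin{equation}
\left|\frac{1}{\sqrt{a_n^2\cos^2\phi + b_n^2\sin^2\phi}} - \frac{1}{\mu}\right|
\leq \frac{1}{b_n} - \frac{1}{a_n} = \frac{a_n - b_n}{a_n b_n},
\end{equation}
and the right-hand side is independent of $\phi$.

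Next I would control that bound quantitatively using the earlier results. By \eqref{aminusb} the numerator satisfies $a_n - b_n \leq (a-b)/2^n$, and by \eqref{abmon} the denominator satisfies $a_n b_n \geq b_0^{\,2} = b^2$ (assuming $b>0$, which holds since $a,b$ are positive reals in the theorem's hypothesis; the degenerate case $b=0$ forces $\mu=0$ and is handled separately or excluded). Therefore
\begin{equation}
\sup_{0\leq\phi\leq\frac{\pi}{2}}\left|\frac{1}{\sqrt{a_n^2\cos^2\phi + b_n^2\sin^2\phi}} - \frac{1}{\mu}\right|
\leq \frac{a-b}{b^2\,2^n} \xrightarrow[n\to\infty]{} 0,
\end{equation}
which is exactly the assertion of uniform convergence: given $\varepsilon>0$, choosing $N$ so that $(a-b)/(b^2 2^N) < \varepsilon$ makes the sup-norm less than $\varepsilon$ for all $n\geq N$, with $N$ depending only on $a,b,\varepsilon$ and not on $\phi$.

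There is no real obstacle here; the only point requiring a moment's care is making sure the denominator is bounded away from $0$ uniformly in both $\phi$ and $n$, which is where the monotonicity statement \eqref{abmon} does the work by giving $a_n b_n \geq b_0 b_0 = b^2 > 0$. One could alternatively bound $1/(a_n b_n)$ by $1/(b_n)^2 \leq 1/b^2$ or even more crudely note $a_n b_n \geq b_n^2 \geq \mu_{\text{lower}}$; any of these suffices. Once uniform convergence is in hand, the standard theorem on interchanging limits and integrals over the finite interval $[0,\tfrac\pi2]$ yields \eqref{Iab1}, completing the chain of reasoning sketched before the proposition.
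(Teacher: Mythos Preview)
Your argument is correct and essentially the same as the paper's: both pinch $\sqrt{a_n^2\cos^2\phi+b_n^2\sin^2\phi}$ between $b_n$ and $a_n$, invoke \eqref{aminusb} to bound $a_n-b_n$ by $(a-b)/2^n$, and use \eqref{abmon} to bound the relevant denominator below by $b^2$, arriving at the identical $\phi$-free estimate $(a-b)/(b^2 2^n)$. The only cosmetic difference is that the paper first bounds $\bigl|\sqrt{\cdots}-\mu\bigr|$ and then passes to reciprocals via the factor $\mu\cdot\sqrt{\cdots}\geq b^2$, whereas you bound the reciprocal directly by the interval length $\tfrac{1}{b_n}-\tfrac{1}{a_n}=(a_n-b_n)/(a_nb_n)$.
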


\begin{proof}
That means given any $\epsilon > 0$ we must prove there exists
positive number $N(\epsilon)$, which is \emph{independent} of the
variable $\phi$, such that the following implication is true:
\begin{equation}
\label{unifc}
n > N(\epsilon) \implies  \biggl|
\frac{1}{\sqrt{a_n^2\cos^2\phi +b_n^2\sin^2\phi}}
- \frac{1}{\mu} \biggr| < \epsilon.
\end{equation}
However, the identity $\cos^2\phi + \sin^2\phi = 1$ as well as the
inequalities \eqref{abmon} and
$$
b_n \leq \sqrt{a_n^2 \cos^2\phi + b_n^2 \sin^2} \leq a_n
$$
show us that
$$
-(a_n - b_n) = b_n - a_n < b_n - \mu
< \sqrt{a_n^2 \cos^2\phi + b_n^2 \sin^2\phi} - \mu < a_n - \mu
< a_n - b_n,
$$
that is,
\begin{equation}
\label{aminusb1}
\biggl| \sqrt{a_n^2 \cos^2\phi + b_n^2 \sin^2\phi} - \mu \biggr|
< a_n - b_n < \frac{a-b}{2^n}
\end{equation}
where we applied \eqref{aminusb} in the last inequality. Now,
$$
\biggl| \frac{1}{\sqrt{a_n^2 \cos^2\phi + b_n^2 \sin^2\phi}}
- \frac{1}{\mu} \biggr|
= \biggl| \frac{\sqrt{a_n^2 \cos^2\phi + b_n^2 \sin^2\phi} - \mu}
{\mu \cdot \sqrt{a_n^2 \cos^2\phi + b_n^2 \sin^2\phi}} \biggr|
< \frac{a-b}{2^n} \frac{1}{b^2}
$$ 
by \eqref{aminusb1} and \eqref{abmon}. For the implication
\eqref{unifc} to be true, it is sufficient that the following
inequality be true:
$$
\frac{a - b}{2^n} \frac{1}{b^2} < \epsilon
\iff 2^n > \frac{a - b}{b^2\epsilon}
\iff n > \frac{\ln\bigl( \frac{a-b}{b^2\epsilon} \bigr)}{\ln 2} \,,
$$
that is, the choice 
\begin{equation}
N(\epsilon) := \frac{\ln\bigl( \frac{a-b}{b^2\epsilon} \bigr)}{\ln 2}
\end{equation}
proves the truth of the implication \eqref{unifc}, and that,
therefore, we can interchange the limit and integral signs
in~\eqref{Iab1}.
\end{proof}


Gauss' original proof is based on the following \emph{change of
variable in the integral} $I(a,b)$: we introduce a new variable,
$\phi'$ instead of $\phi$ by the formula:
\begin{equation}
\label{phi'}
\boxed{\sin\phi =: \frac{2a\sin\phi'}{a + b + (a - b) \sin^2\phi'}}\,.
\end{equation}

\begin{prop}
\label{pr:phi'}
Under the mapping \eqref{phi'} the interval
$0 \leq \phi' \leq \frac{\pi}{2}$ corresponds bijectively to the 
interval $0 \leq \phi \leq \frac{\pi}{2}$.
\end{prop}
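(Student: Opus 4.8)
The plan is to show that the formula \eqref{phi'} defines a smooth, strictly increasing map from $\phi'\in[0,\tfrac{\pi}{2}]$ onto $\phi\in[0,\tfrac{\pi}{2}]$, and then invoke continuity and strict monotonicity to conclude bijectivity. Write
\[
f(\phi') := \frac{2a\sin\phi'}{a+b+(a-b)\sin^2\phi'},
\]
so that the relation \eqref{phi'} reads $\sin\phi = f(\phi')$. The first step is to check the endpoint values: plainly $f(0)=0$, and at $\phi'=\tfrac{\pi}{2}$ the denominator becomes $a+b+(a-b)=2a$, so $f(\tfrac{\pi}{2}) = \tfrac{2a}{2a}=1$. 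Thus the candidate $\phi$ runs from $0$ to the value whose sine is $1$, i.e.\ from $0$ to $\tfrac{\pi}{2}$, provided we can show $f$ takes values in $[0,1]$ throughout and is increasing.

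The second step is to verify $0\le f(\phi')\le 1$ on the whole interval. Nonnegativity is immediate since $a\ge b\ge 0$ forces numerator and denominator both $\ge 0$ (when $a=b=0$ the formula degenerates, but that case is excluded by positivity in the theorem; one may assume $a>0$). For the upper bound, $f(\phi')\le 1$ is equivalent to $2a\sin\phi' \le a+b+(a-b)\sin^2\phi'$, i.e.\ to $(a-b)\sin^2\phi' - 2a\sin\phi' + (a+b)\ge 0$; substituting $s=\sin\phi'\in[0,1]$ this quadratic factors, and in fact $(a-b)s^2 - 2as + (a+b) = (a-b)(s-1)^2 + 2b(1-s) \ge 0$, which makes the bound transparent, with equality only at $s=1$. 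Hence $f$ maps $[0,\tfrac\pi2]$ into $[0,1]$, and composing with $\arcsin$ gives a well-defined $\phi=\arcsin f(\phi')\in[0,\tfrac\pi2]$.

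The third step — which I expect to be the main technical point — is strict monotonicity. Differentiate: a routine quotient-rule computation gives
\[
f'(\phi') = \frac{2a\cos\phi'\,\bigl[(a+b) - (a-b)\sin^2\phi'\bigr]}{\bigl(a+b+(a-b)\sin^2\phi'\bigr)^2}.
\]
On the open interval $0<\phi'<\tfrac\pi2$ we have $\cos\phi'>0$, and the bracket satisfies $(a+b)-(a-b)\sin^2\phi' \ge (a+b)-(a-b) = 2b \ge 0$, with strict positivity as long as $b>0$; together with $a>0$ this makes $f'(\phi')>0$ there. (If $b=0$ one checks directly that $f(\phi')=\tfrac{2a\sin\phi'}{a(1+\sin^2\phi')}$ is still strictly increasing on $[0,\tfrac\pi2]$.) Therefore $f$ is strictly increasing and continuous on $[0,\tfrac\pi2]$, so it is a continuous bijection onto its image $[f(0),f(\tfrac\pi2)]=[0,1]$; since $\sin$ is a continuous increasing bijection of $[0,\tfrac\pi2]$ onto $[0,1]$, the composite $\phi'\mapsto\phi=\arcsin f(\phi')$ is a continuous strictly increasing bijection of $[0,\tfrac\pi2]$ onto $[0,\tfrac\pi2]$, which is the assertion. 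The only real care needed is bookkeeping the degenerate boundary behaviour ($a=b$, or $b=0$) so that the denominator never vanishes and the monotonicity is genuinely strict.
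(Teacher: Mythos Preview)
Your proof is correct and follows essentially the same route as the paper: compute the derivative, bound the bracket $(a+b)-(a-b)\sin^2\phi'\ge 2b>0$ to get strict monotonicity, and check the endpoint values $0$ and~$1$. The only cosmetic difference is that the paper first substitutes $t=\sin\phi'$ and works with the rational function $f(t)=\dfrac{2at}{a+b+(a-b)t^2}$ on $[0,1]$, which avoids the extra $\cos\phi'$ factor and makes your separate verification of $f\le 1$ unnecessary (it follows automatically from $f(1)=1$ and monotonicity).
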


\begin{proof}
Define the function
\begin{equation}
f(t) := \frac{2at}{a + b + (a - b)t^2} \,.
\end{equation}
Then 
\begin{equation}
f'(t) = 2a\frac{a + b - (a - b)t^2}{\{a + b + (a - b)t^2\}^2}
\geq \frac{2ab}{\{a + b + (a - b)t^2\}^2} > 0,
\end{equation}
which proves that $f(t)$ is \emph{increasing} on $[0,1]$. Moreover,
$$
f(0) = 0,  \qquad  f(1) = 1,
$$
which shows that $f(t)$ maps $[0,1]$ bijectively onto itself. This
completes the proof.
\end{proof}


\begin{proof}[Proof of Gauss' theorem on elliptic integrals]

Gauss, himself \cite{AGM}, first states Theorem~\ref{Gauss}.
Then he blithely asserts
\begin{quote}

 ``\emph{Evolutione autem rite facta,
invenitur esse}...'' 

\end{quote}
which translates to 
\begin{quote}
``\emph{After the development
has been made correctly, it will be seen (that)\dots}
\begin{equation}
\label{EI1}
\frac{d\phi}{\sqrt{a^2 \cos^2\phi + b^2 \sin^2\phi}}
= \frac{d\phi'}{\sqrt{a_1^2 \cos^2\phi' + b_1^2 \sin^2\phi'}}\,.
\text{''}
\end{equation}

\end{quote}
(We have changed Gauss' notation: he writes $m,n,m',n',T,T'$ in place
of our $a,b,a_1,b_1,\phi,\phi'$, respectively.) This, of course, is
the step
\begin{equation}
\label{Ia1b1}
I(a,b) = I(a_1,b_1)
\end{equation}
in the notation~\eqref{Iab}.

We will show how the development is ``made correctly'' (!).

\medskip

\noindent \emph{Claim 1:}
\begin{equation}
\label{claim1}
\cos\phi
= \frac{2\cos\phi' \sqrt{a_1^2 \cos^2\phi' + b_1^2 \sin^2\phi'}}
{a + b + (a - b)\sin^2\phi'} \,.
\end{equation}

\begin{proof}
By \eqref{phi'} and \eqref{anbn},
\begin{align*}
\cos^2\phi  = 1 - \sin^2\phi
\\
&= 1 - \frac{4a^2 \sin^2\phi'}{\{(a + b) + (a - b) \sin^2\phi'\}^2}
\\
&= \frac{(a + b)^2 + 2(a^2 - b^2) \sin^2\phi'
+ (a - b)^2 \sin^4\phi' - 4a^2 \sin^2\phi'}
{\{(a + b) + (a - b) \sin^2\phi'\}^2}
\\
&= \frac{4a_1^2 - 4(2a_1^2 - b_1^2) \sin^2\phi'
+ 4(a_1^2 - b_1^2) \sin^4\phi'}{\{(a + b) + (a - b) \sin^2\phi'\}^2}
\\
&= \frac{4a_1^2 \cos^4\phi' + 4b_1^2 \sin^2\phi' \cos^2\phi'}
{\{(a + b) + (a - b) \sin^2\phi'\}^2}
\end{align*}
and factoring out $4\cos^2\phi'$ and taking the square root of both
sides gives us~\eqref{claim1}.
\end{proof}

\noindent \emph{Claim 2:}
\begin{equation}
\label{Claim2}
\sqrt{a^2 \cos^2\phi + b^2 \sin^2\phi}
= a\,\frac{(a + b) + (a - b)\sin^2\phi'}
{(a + b) - (a - b)\sin^2\phi'} \,.
\end{equation}

\begin{proof}
By \eqref{claim1}, \eqref{phi'} and \eqref{anbn}, we obtain

\begin{align*}
a^2 \cos^2\phi + b^2 \sin^2\phi &= a^2 \biggl\{
\frac{2 \cos\phi' \sqrt{a_1^2 \cos^2\phi' + b_1^2 \sin^2\phi'}}
{a + b + (a - b) \sin^2\phi'} \biggr\}^2
+ \frac{4a^2b^2 \sin^2\phi'}{\{(a + b) + (a - b) \sin^2\phi'\}^2} 
\\
&= \frac{4a^2 \cos^2\phi' (a_1^2 \cos^2\phi' + b_1^2 \sin^2\phi')
+ 4a^2b^2 \sin^2\phi'}{\{(a + b) + (a - b)\sin^2\phi'\}^2}
\\
&= 4a^2 \frac{a_1^2(1 - \sin^2\phi')^2
+ b_1^2\sin^2\phi'(1 - \sin^2\phi') + b^2\sin^2\phi'}
{\{(a + b) + (a - b) \sin^2\phi'\}^2}
\\
&= a^2 \frac{(a + b)^2(1 - \sin^2\phi')^2 
+ 4ab \sin^2\phi'(1 - \sin^2\phi') + (a - b)^2 \sin^4\phi'}
{\{(a + b) + (a - b) \sin^2\phi'\}^2}
\\
&= a^2\frac{(a + b)^2 - 2(a - b)(a + b) \sin^2\phi' + 4b^2 \sin^2\phi'}
{\{(a + b) + (a - b) \sin^2\phi'\}^2}
\\
&= \biggl\{ a\,\frac{(a + b) + (a - b)\sin^2\phi'}
{(a + b) - (a - b) \sin^2\phi'} \biggr\}^2
\end{align*}
and taking the square root of both sides gives us~\eqref{Claim2}.
\end{proof}

Now we can complete the proof of Gauss' theorem. We take the
differential of the left hand side of \eqref{phi'}: we obtain
$$
\cos\phi \,d\phi
= \frac{2\cos\phi' \sqrt{a_1^2 \cos^2\phi' + b_1^2 \sin^2\phi' }}
{a + b + (a - b) \sin^2\phi'} \,d\phi,
$$
where we applied \eqref{claim1}. 

Taking the differential of the right side of \eqref{phi'} we get
$$
d \biggl\{ \frac{2a\sin\phi'}{a + b + (a - b)\sin^2\phi'} \biggr\}
= \frac{2a\cos\phi'\{(a + b) - (a - b) \sin^2\phi'\}}
{\{(a + b) + (a - b) \sin^2\phi'\}^2} \,d\phi'.
$$

Equating the right hand side of the previous two equations we and
using \eqref{Claim2} we obtain
\begin{align*}
\frac{2\cos\phi' \sqrt{a_1^2 \cos^2\phi' + b_1^2 \sin^2\phi'}}
{a + b + (a - b) \sin^2\phi'} \,d\phi
&= \frac{2a\cos\phi' \{(a + b) - (a - b) \sin^2\phi'\}}
{\{(a + b) + (a - b) \sin^2\phi'\}^2} \,d\phi' 
\\
\implies  \frac{d\phi}{\sqrt{a^2 \cos^2\phi  + b^2 \sin^2\phi}}
&= \frac{a\{(a + b) - (a - b) \sin^2\phi'\}}
{\{(a + b) + (a - b) \sin^2\phi'\}^2} \,d\phi'
\cdot \frac{\frac{\{(a+b)-(a-b)\sin^2\phi'\}^2}
{a\{(a+b)-(a-b)\sin^2\phi'\}}}
{\sqrt{a_1^2 \cos^2\phi' + b_1^2 \sin^2\phi'}}
\\ 
&= \frac{d\phi'}{\sqrt{a_1^2 \cos^2\phi' + b_1^2 \sin^2\phi'}} \,.
\end{align*}
This completes the proof of \eqref{EI1}, therefore of \eqref{Ia1b1},
and therefore, of Gauss' theorem.
\end{proof}


\section{The Simple Pendulum}

First, we define the dynamical system. It is an idealization of a real
pendulum.

\begin{defn}
The \textbf{simple pendulum} consists of a particle which is
constrained to move without friction on the circumference of a
vertical circle and which is acted upon only by gravity. We describe
it mechanically as follows:
\begin{itemize}
\item
a massless inextensible rigid rod has a point-mass attached to one 
end;
\item
the rod is suspended from a frictionless pivot;
\item
when the point-mass is given an initial push perpendicular to the rod,
it will swing back and forth in one vertical plane and with a constant
amplitude;
\item
there is no air resistance.
\end{itemize}
\end{defn}

The following properties of the simple pendulum are readily available
in numerous textbooks. For example, the standard work of \textsc{Pars}
\cite{Pars1}.

The \emph{nonlinear differential equation} which models the motion of
the simple pendulum is
\begin{equation}
\label{DE}
\frac{d^2\theta}{dt^2} + \frac{g}{l} \sin\theta = 0,
\end{equation}
where $g$ is the acceleration due to gravity, $l$ is the length of the
pendulum, and $\theta(t)$ is the angular displacement, at time $t$, of
the pendulum measured positively (counter-clockwise) from the
vertical equilibrium position.

The \emph{Period} of the pendulum, $T$, is the time taken by a double
oscillation, to and fro, and is given by the following famous formula
\begin{equation}
\label{Period}
T = 4\sqrt{\frac{l}{g}} \int_0^{\frac{\pi}{2}} 
\frac{d\phi}{\sqrt{1 - \sin^2\frac{\alpha}{2} \sin^2\phi}}
\end{equation}
where $\alpha$ is the maximum angular displacement of the pendulum.

The formula \eqref{Period} shows that the period is proportional to a
product of a function of the length, $l$, alone, and the maximum
angular amplitude, $\alpha$, alone. That is, there is already a
``separation of variables'' in the formula for the period.

The integral in \eqref{Period} is a \emph{complete elliptic integral
of the first kind} and, as we already noted, cannot be evaluated by
any finite combination of elementary functions. So we must find
suitable \emph{approximative formulas} for~$K$.

It is customary to write $k := \sin \frac{\alpha}{2}$ so that the
integral in \eqref{Period} is
\begin{equation}
\label{K}
K(k) = \int_0^{\frac{\pi}{2}}\frac{d\phi}{\sqrt{1 - k^2\sin^2\phi}}\,.
\end{equation}
The quantity $k$ is called the \emph{modulus} of $K$ and
$\frac{\alpha}{2}$ is called the \emph{modular angle}. In our case,
the modular angle is one-half of the maximum angular displacement of
the pendulum, and we write (with an abuse of notation)
$K(k) \equiv K(\alpha).$ The \emph{complimentary modulus}, $k' \geq 0$
is defined by $k^2 + k'^2 = 1$ and the corresponding complete elliptic
integral of the first kind is $K(k')\equiv K'$.

Expanding \eqref{K} by the binomial theorem and integrating term by
term, we obtain
\begin{equation}
K(k) = \frac{\pi}{2} \biggl\{ 1 + \sum_{n=1}^\infty \biggl[
\frac{1 \cdot 3 \cdot 5 \cdots (2n - 1)}{2 \cdot 4 \cdot 6 \cdots 2n}
\biggr]^2 k^{2n} \biggr\}.
\end{equation}
This gives us the fundamental theorem:


\begin{thm}
\label{Per}
The period of a simple pendulum of length $l$, oscillating through an
angle~$2\alpha$, is equal to
\begin{align*}
T &= 4\sqrt{\frac{l}{g}} \cdot K(k)
\\
&= 2\pi \sqrt{\frac{l}{g}} \biggl\{ 1 + \Bigl(\frac{1}{2}\Bigr)^2
\Bigl(\sin\frac{\alpha}{2}\Bigr)^2
+ \Bigl(\frac{1\cdot 3}{2\cdot 4}\Bigr)^2 
\Bigl(\sin\frac{\alpha}{2}\Bigr)^4
+ \Bigl(\frac{1\cdot 3\cdot 5}{2\cdot 4\cdot 6}\Bigr)^2
\Bigl(\sin\frac{\alpha}{2}\Bigr)^6 + \cdots \biggr\}
\\
&= 2\pi \sqrt{\frac{l}{g}} \biggl\{ 1 + \frac{1}{16}\alpha^2
+ \frac{11}{3072}\alpha^4 + \frac{173}{737280}\alpha^6
+ \frac{22931}{1321205760}\alpha^8 + \cdots \biggr\}.
\end{align*} 
\qed
\end{thm}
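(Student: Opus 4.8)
The plan is to obtain the three displayed formulas in succession, each from its predecessor by an elementary manipulation of series. The first equality is immediate from the definitions: formula \eqref{Period} already exhibits the period as $4\sqrt{l/g}$ times $\int_0^{\pi/2} d\phi/\sqrt{1-\sin^2(\alpha/2)\sin^2\phi}$, and since we have set $k:=\sin(\alpha/2)$ (with $\alpha<\pi$, so that the integral is finite), this integral is by the very definition \eqref{K} equal to $K(k)$. Hence $T=4\sqrt{l/g}\,K(k)$.

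For the second equality I would expand the integrand of \eqref{K} in the binomial series
\[
\frac{1}{\sqrt{1-k^2\sin^2\phi}}
=\sum_{n=0}^{\infty}\frac{(2n)!}{4^n(n!)^2}\,k^{2n}\sin^{2n}\phi
=\sum_{n=0}^{\infty}\Bigl[\frac{1\cdot 3\cdots(2n-1)}{2\cdot 4\cdots 2n}\Bigr]k^{2n}\sin^{2n}\phi ,
\]
and integrate it term by term over $[0,\tfrac{\pi}{2}]$. The term-by-term integration is legitimate: since $0\le k<1$ we have $k^2\sin^2\phi\le k^2<1$ uniformly in $\phi$, so the Weierstrass $M$-test with $M_n=\bigl[\frac{1\cdot 3\cdots(2n-1)}{2\cdot 4\cdots 2n}\bigr]k^{2n}$ (whose sum is $(1-k^2)^{-1/2}$) shows the series converges uniformly on $[0,\tfrac{\pi}{2}]$ --- exactly the kind of argument already used in Section~3. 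Combining this with Wallis' formula $\int_0^{\pi/2}\sin^{2n}\phi\,d\phi=\frac{\pi}{2}\cdot\frac{1\cdot 3\cdots(2n-1)}{2\cdot 4\cdots 2n}$ gives $K(k)=\frac{\pi}{2}\bigl\{1+\sum_{n\ge 1}\bigl[\frac{1\cdot 3\cdots(2n-1)}{2\cdot 4\cdots 2n}\bigr]^2 k^{2n}\bigr\}$, which is the expansion displayed just before the theorem. Multiplying by $4\sqrt{l/g}$, using $4\cdot\frac{\pi}{2}=2\pi$, and restoring $k=\sin(\alpha/2)$ yields the second form.

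For the third equality I would compose power series: substitute the Maclaurin expansion $\sin(\alpha/2)=\frac{\alpha}{2}-\frac{1}{6}\bigl(\frac{\alpha}{2}\bigr)^3+\frac{1}{120}\bigl(\frac{\alpha}{2}\bigr)^5-\cdots$ into the series $1+\bigl(\frac12\bigr)^2\sin^2\frac{\alpha}{2}+\bigl(\frac{1\cdot 3}{2\cdot 4}\bigr)^2\sin^4\frac{\alpha}{2}+\cdots$; since the composition of convergent power series is again a convergent power series, expanding each power of $\sin(\alpha/2)$ and collecting like powers of $\alpha$ is justified for $\alpha$ near $0$. The coefficient of $\alpha^2$ comes only from the $\sin^2$ term and equals $\frac14\cdot\frac14=\frac{1}{16}$; the coefficient of $\alpha^4$ collects $\frac14\cdot\bigl(-\frac{1}{48}\bigr)$ from the $\sin^2$ term and $\frac{9}{64}\cdot\frac{1}{16}$ from the $\sin^4$ term, for a total of $-\frac{16}{3072}+\frac{27}{3072}=\frac{11}{3072}$; pushing two orders further in the same way produces $\frac{173}{737280}$ and $\frac{22931}{1321205760}$.

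I do not anticipate a genuine obstacle here. The one point that must not be glossed over is the interchange of summation and integration in the second step, which the uniform-convergence bound above settles cleanly; the one merely tedious point is carrying enough terms of the auxiliary series in the third step to pin down the $\alpha^8$ coefficient, a purely mechanical rational-arithmetic computation.
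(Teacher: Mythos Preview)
Your proposal is correct and follows exactly the approach sketched in the paper: the paper derives the second line just before the theorem by ``expanding \eqref{K} by the binomial theorem and integrating term by term,'' and explains the third line just after the theorem as coming from ``substituting the \textsc{MacLaurin} expansion of $\sin\frac{\alpha}{2}$ into the previous series, and rearranging in increasing powers of~$\alpha$.'' You supply more detail than the paper does---in particular an explicit uniform-convergence justification for the termwise integration and a check of the first two nontrivial coefficients---but the route is the same.
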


The last formula in Theorem~\ref{Per} comes from substituting the
\textsc{MacLaurin} expansion of $\sin\frac{\alpha}{2}$ into the
previous series, and rearranging in increasing powers of~$\alpha$.

Thus, if a pendulum swinging through an angle of $2\alpha$ makes $N$
beats a day, and if $\alpha$ is increased by $\delta\alpha$, then the
formula $N \cdot \frac{T}{2} = 86400$, where $86400$ is the number of
seconds in a day, shows that \emph{the pendulum will lose
$$
43200 \sqrt{\frac{g}{l}} \biggl\{ \frac{1}{K(\alpha)}
- \frac{1}{K(\alpha + \delta\alpha)} \biggr\}
$$
beats a day}.

For example, a pendulum, which beats seconds when swinging through an
angle of $6^o$ will \emph{lose} about $11\frac{1}{2}$ seconds a day if
made to swing through $8^\circ$, and about $26\frac{1}{3}$ seconds a day
if made to swing through $10^\circ$~\cite{Gre}.

If we truncate the previous series expansions for the period, we obtain the following approximative formulas:
\begin{align}
T &\approx 2\pi \sqrt{\frac{l}{g}} 
\nonumber \\
&\approx 2\pi \sqrt{\frac{l}{g}} \biggl\{ 1
+ \Bigl(\frac{1}{2}\Bigr)^2 \Bigl(\sin\frac{\alpha}{2}\Bigr)^2 \biggr\}
\nonumber \\ 
&\approx 2\pi \sqrt{\frac{l}{g}} \biggl\{ 1 + \frac{1}{16} \alpha^2
\biggr\}.
\end{align}

The first formula, 
\begin{equation}
\label{Huy}
T \approx  2\pi \sqrt{\frac{l}{g}} \equiv T_0
\end{equation}
is the \textsc{Huygens} \emph{formula} or the \emph{small angle
approximation} for the period. It does not contain $\alpha$ and gives
an approximation which is \emph{independent} of the period. Indeed, it
is the formula for the period of \emph{simple harmonic motion}, (SHM),
realized by a particle travelling in a circular path of with constant
angular velocity $\sqrt{\dfrac{g}{l}}$.

Just how accurate is the Huygens formula? It seems worthwhile to cite
the lower bound found by \textsc{Thurston}~\cite{Thur} and the upper
bound found by \textsc{Pars}~\cite{Pars1}.

\begin{cor}[Pars--Thurston]
The Huygens small-angle approximation satisfies
\begin{equation}
\frac{\frac{\alpha}{2}} {\sin\frac{\alpha}{2}}
\leq \frac{T}{2\pi\sqrt{\frac{l}{g}}}
\leq \sqrt{\frac{\alpha}{\sin\alpha}}
\end{equation}
for $0 \leq \alpha \leq \frac{\pi}{2}$.
\qed
\end{cor}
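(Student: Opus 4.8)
\section*{Proof proposal}

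The plan is to translate the whole double inequality into a statement about the arithmetic–geometric mean and then to read it off from the one–step bracketing already contained in \eqref{abmon}. We may assume $0<\alpha\le\frac{\pi}{2}$, the endpoint $\alpha=0$ being the trivial identity $1\le1\le1$. Write $k:=\sin\frac{\alpha}{2}$ and $k':=\cos\frac{\alpha}{2}$, so that $k^2+k'^2=1$ and $1-k^2\sin^2\phi=\cos^2\phi+k'^2\sin^2\phi$. Since $0<\frac{\alpha}{2}<\frac{\pi}{2}$ both $1$ and $\cos\frac{\alpha}{2}$ are positive, so Gauss' theorem \eqref{EI} applied with $a=1$, $b=\cos\frac{\alpha}{2}$ gives, together with \eqref{Period} and \eqref{K},
\[
\frac{T}{2\pi\sqrt{\frac{l}{g}}}
=\frac{2}{\pi}K(k)
=\frac{2}{\pi}\int_0^{\pi/2}\frac{d\phi}{\sqrt{\cos^2\phi+\cos^2\frac{\alpha}{2}\,\sin^2\phi}}
=\frac{1}{M\!\left(1,\cos\frac{\alpha}{2}\right)}.
\]
Hence the corollary is equivalent to the pair of inequalities
\[
\sqrt{\frac{\sin\alpha}{\alpha}}\;\le\;M\!\left(1,\cos\tfrac{\alpha}{2}\right)\;\le\;\frac{\sin\frac{\alpha}{2}}{\frac{\alpha}{2}}\,.
\]

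For this I would use only the \emph{first} step of the AGM. Running the algorithm on $a_0=1$, $b_0=\cos\frac{\alpha}{2}$, the chain \eqref{abmon} shows that the common limit $\mu=M(1,\cos\frac{\alpha}{2})$ lies between $b_1$ and $a_1$, i.e.
\[
\sqrt{\cos\tfrac{\alpha}{2}}\;\le\;M\!\left(1,\cos\tfrac{\alpha}{2}\right)\;\le\;\frac{1+\cos\frac{\alpha}{2}}{2}\,.
\]
It therefore suffices to prove the two elementary inequalities
\[
\sqrt{\frac{\sin\alpha}{\alpha}}\le\sqrt{\cos\tfrac{\alpha}{2}}
\qquad\text{and}\qquad
\frac{1+\cos\frac{\alpha}{2}}{2}\le\frac{\sin\frac{\alpha}{2}}{\frac{\alpha}{2}}
\]
for $0<\alpha\le\frac{\pi}{2}$.

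Both collapse to schoolbook facts via the half–angle identities $\sin\alpha=2\sin\frac{\alpha}{2}\cos\frac{\alpha}{2}$, $1+\cos\frac{\alpha}{2}=2\cos^2\frac{\alpha}{4}$, $\sin\frac{\alpha}{2}=2\sin\frac{\alpha}{4}\cos\frac{\alpha}{4}$. Squaring the first inequality and cancelling the positive factor $\cos\frac{\alpha}{2}$ turns it into $2\sin\frac{\alpha}{2}\le\alpha$, which is $\sin x\le x$. The second inequality is $\alpha\bigl(1+\cos\frac{\alpha}{2}\bigr)\le 4\sin\frac{\alpha}{2}$; dividing by $2\cos^2\frac{\alpha}{4}>0$ it becomes $\frac{\alpha}{2}\le 2\tan\frac{\alpha}{4}$, i.e. $\tan\frac{\alpha}{4}\ge\frac{\alpha}{4}$, which is $\tan x\ge x$ on $[0,\frac{\pi}{2})$. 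I do not anticipate a real obstacle: the only thing one has to \emph{see} is that the crude one–step enclosure $\sqrt{ab}\le M(a,b)\le\frac{a+b}{2}$ — with no iteration of the algorithm at all — is already sharp enough, after which everything reduces to the two classical bounds $\sin x\le x$ and $\tan x\ge x$ plus half–angle bookkeeping.
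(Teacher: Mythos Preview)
Your argument is correct. The paper itself does not supply a proof of this corollary: it merely states the result, attributes the lower bound to \textsc{Thurston} and the upper bound to \textsc{Pars}, and closes with a bare \qed. So there is nothing to compare against directly.

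That said, your route is worth a remark because it is particularly well adapted to the paper. Rather than estimating the integrand of $K(k)$ directly (which is presumably what Thurston and Pars do in the cited references), you invoke Gauss' identity $\dfrac{T}{T_0}=\dfrac{1}{M(1,\cos\frac{\alpha}{2})}$ and then use nothing more than the \emph{zeroth} bracketing $b_1\le\mu\le a_1$ from \eqref{abmon}. The two residual inequalities reduce, via half-angle identities, to $\sin x\le x$ and $\tan x\ge x$ on the relevant subintervals of $[0,\tfrac{\pi}{2})$, exactly as you say. The only cosmetic point: in your last reduction, after dividing $\alpha\bigl(1+\cos\frac{\alpha}{2}\bigr)\le 4\sin\frac{\alpha}{2}$ by $2\cos^2\frac{\alpha}{4}$ you obtain $\alpha\le 4\tan\frac{\alpha}{4}$, i.e.\ $\tfrac{\alpha}{4}\le\tan\tfrac{\alpha}{4}$; your line ``$\tfrac{\alpha}{2}\le 2\tan\tfrac{\alpha}{4}$'' is of course the same thing, but writing it as $\tfrac{\alpha}{4}\le\tan\tfrac{\alpha}{4}$ makes the appeal to $\tan x\ge x$ visually cleaner.
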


As Thurston points out, if he were to use the Huygens formula to
adjust the length of his grandfather clock, which has an amplitude of
$5^\circ$, to beat seconds, the error bounds show that the clock would
lose between $4$ and $8$ minutes per week.

These same bounds show that the Huygens formula is accurate to within
$1\%$ of the true period $T$ for $\alpha$ smaller than
about~$14^\circ$.

The second formula, 
\begin{equation}
\label{P2}
T \approx 2\pi \sqrt{\frac{l}{g}} \biggl\{ 1
+ \Bigl(\frac{1}{2}\Bigr)^2 \Bigl(\sin\frac{\alpha}{2}\Bigr)^2 \biggr\}
\end{equation}
tells us that in the correction for the amplitude of a swing, the
period must be increased by the fraction
$\frac{1}{4} \sin^2\frac{\alpha}{2}$ of itself. Thus, if a pendulum
swinging through an angle of $2\alpha$ makes $N$ beats a day, and if
$\alpha$ is increased by $\delta \alpha$, \emph{the pendulum will lose
approximately
$\bigl( \frac{N}{8} \cdot \sin\alpha\cdot\delta\alpha \bigr)$ beats per
day}~\cite{Bow}.

The last formula
\begin{equation}
\label{DBer}
T \approx 2\pi \sqrt{\frac{l}{g}}
\biggl\{ 1 + \frac{1}{16} \alpha^2 \biggr\}
\end{equation}
is due to \textsc{Daniel Bernoulli} and is, historically, the
\emph{first} published correction term~\cite{PT} to the Huygens
formula~\eqref{Huy}.


\section{The AGM approximations to the period}

The previous section shows that the problem of finding an
approximative formula for the period $T$ of the simple pendulum
reduces to the problem of approximating the complete elliptic integral
$K(k)\equiv K(\alpha)$.

If we take $a := 1$ and $b := k' = \cos\frac{\alpha}{2}$ in the
formula \eqref{Iab} for Gauss' integral $I(a,b)$, we obtain
\begin{equation}
I\Bigl(1, \cos\frac{\alpha}{2}\Bigr) = K(k)
= \int_0^{\frac{\pi}{2}} 
\frac{d\phi}{\sqrt{1 - \sin^2\frac{\alpha}{2} \sin^2\phi}}
\equiv \int_0^{\frac{\pi}{2}} \frac{d\phi}{\sqrt{1-k^2\sin^2\phi}} \,.
\end{equation}

Applying the AGM to this choice of $a$ and $b$ we obtain the following
sequences:
\begin{align*}
a_0  &= 1 
& b_0 &= \cos\frac{\alpha}{2}  
\\
a_1 &= \frac{1}{2} \Bigl(1 + \cos\frac{\alpha}{2} \Bigr)
= \cos^2\frac{\alpha}{4}
& b_1 &= \Bigl( \cos\frac{\alpha}{2} \Bigr)^{\frac{1}{2}}
\\
a_2 &= \frac{1}{4} \biggl\{
1 + \Bigl( \cos\frac{\alpha}{2} \Bigr)^{\frac{1}{2}} \biggr\}^2
& b_2 &= \cos\frac{\alpha}{4}
\Bigl( \cos\frac{\alpha}{2} \Bigr)^{\frac{1}{4}}
\\
&= \frac{1}{2} \biggl\{ \cos^2\frac{\alpha}{4} 
+ \Bigl( \cos\frac{\alpha}{2} \Bigr)^{\frac{1}{2}} \biggr\}
\\
a_3 &= \frac{1}{4} \biggl\{ \cos^2\frac{\alpha}{4}
+ \Bigl( \cos\frac{\alpha}{2} \Bigr)^{\frac{1}{2}} \biggr\}^2
& b_3 &= \frac{1}{2} \biggl\{
1 + \Bigl( \cos\frac{\alpha}{2}\Bigr)^{\frac{1}{2}} \biggr\}
\Bigl( \cos\frac{\alpha}{4} \Bigr)^{\frac{1}{2}}
\Bigl( \cos\frac{\alpha}{2} \Bigr)^{\frac{1}{8}}
\\    
\cdots &= \cdots & \cdots &= \cdots
\end{align*}

We can use either $\dfrac{1}{a_n}$ or $\dfrac{1}{b_n}$ as an
approximation to $\dfrac{1}{\mu}$.

In order to discuss the accuracy of these approximations, we recall
some definitions from numerical analysis. See
\textsc{Hildebrand}~\cite{Hil}. Each digit of a number, except zero,
which serves only to fix the position of the decimal point is called a
\emph{significant digit} or a \emph{significant figure} of that
number.

\begin{defn}
\begin{enumerate}
\item[(a)]
If any approximation $\overline{N}$ to a number $N$ has the property
that both $\overline{N}$ and $N$ round to the same set of significant
figures, and if $n$ is the LARGEST integer for which this statement is
true, then $\overline{N}$ is said to \textbf{approximate $N$ to $n$
significant digits}.
\item[(b)]
\begin{equation}
\label{RE}
R(\overline{N}) \equiv \text{\textbf{relative error}}
:= \frac{\text{true value} - \text{approximate value}}
{\text{true value}} \equiv \frac{E(\overline{N})}{N} \,,
\end{equation}
where $E \equiv E(\overline{N})$ is the \textbf{absolute error}.
 \end{enumerate}
\end{defn}

The importance of the \emph{relative} error is shown in the following
result.

\begin{prop}
$\overline{N}$ approximates $N$ to $n$ significant digits if and only
if
\begin{equation}
R(\overline{N}) < \frac{(\frac{1}{2})}{10^n}.
\end{equation}
\qed
\end{prop}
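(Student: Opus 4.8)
\medskip\noindent
The plan is to pass to normalized scientific notation, rephrase the condition ``$\overline N$ and $N$ round to the same $n$ significant figures'' as an absolute-error bound of half a unit in the $n$th significant place, and then convert that bound into one on $R=E/N$ by dividing through by $N$.

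First I would reduce to the case $N>0$ and $\overline N>0$: opposite signs force the two rounded values to differ (unless both round to $0$, which is handled separately). Writing $N=f\cdot 10^{p}$ with $1\le f<10$ and $p=\lfloor\log_{10}N\rfloor\in\mathbb Z$, the $j$th significant digit of $N$ occupies the $10^{\,p-j+1}$ place, so rounding $N$ to $n$ significant figures means replacing it by the nearest integer multiple of $u_n:=10^{\,p-n+1}$. Hence $N$ and $\overline N$ round to the same $n$ significant figures precisely when they lie in one common interval $[(k-\tfrac12)u_n,\,(k+\tfrac12)u_n)$, equivalently when $|E|=|N-\overline N|$ is smaller than the distance from $N$ to the appropriate endpoint of its own rounding interval; in particular $|E|<\tfrac12 u_n$ is enough, and failure to be correct to $n+1$ figures forces $|E|$ to be of order at least $\tfrac12 u_{n+1}=\tfrac12\cdot 10^{\,p-n}$.

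For the implication ``$R<\tfrac12\cdot 10^{-n}\Rightarrow$ correct to $n$ figures'', note that $R=E/(f\cdot 10^{p})$ gives $|E|=|R|\,f\,10^{p}<\tfrac12\cdot 10^{-n}\cdot f\cdot 10^{p}=\tfrac f2\,10^{\,p-n}<\tfrac12\,10^{\,p-n+1}=\tfrac12 u_n$, since $f<10$, so $\overline N$ and $N$ round identically to $n$ figures. For the converse I would combine the two estimates of the previous paragraph: correctness to $n$ figures bounds $|E|$, hence $|R|=|E|/(f\cdot 10^{p})$, from above, while maximality of $n$ supplies the matching lower bound.

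The step I expect to be the main obstacle is making the constant come out to exactly $\tfrac12\cdot 10^{-n}$ in the reverse direction rather than the cruder $\tfrac f2\cdot 10^{-n}\le 5\cdot 10^{-n}$: the sharp constant there is governed by the leading significant digit of $N$, and the value $\tfrac12\cdot 10^{-n}$ is precisely the threshold that makes the forward direction hold for \emph{every} leading digit. Alongside this, a fully rigorous argument must pin down the rounding convention and dispose of the boundary pathologies that the ``round to the same set of significant figures'' wording hides --- the tie-breaking rule when $N$ or $\overline N$ is an exact half-multiple of $u_n$, the carry that shifts $p$ when rounding crosses a power of $10$ (e.g.\ $9.9997\mapsto 10.00$), and the double-rounding effect by which $N$ and $\overline N$ can agree at $n+1$ figures yet disagree at $n$. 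Settling these cases and assembling the two estimates completes the equivalence.
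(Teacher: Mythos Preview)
The paper offers no proof of this proposition at all: it is stated as a standard fact from numerical analysis (the surrounding discussion cites \textsc{Hildebrand}) and is used only once, in the last line of the proof of Corollary~5.8, to translate the bound $R_2<\tfrac12\cdot10^{-5}$ into the phrase ``correct to 5 significant digits''. So there is nothing in the paper for your attempt to be compared against.

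That said, your sketch is the right shape for the forward implication, and --- more importantly --- you have put your finger on exactly why the biconditional, read literally, is not a theorem. Your own computation shows that from ``$\overline N$ and $N$ agree to $n$ rounded figures'' one only gets $|R|<\tfrac{1}{2f}\cdot10^{-(n-1)}$, which for leading digit $f\ge1$ can be as large as $5\cdot10^{-n}$; the sharp constant genuinely depends on the first significant figure of $N$. There is also a tension with the paper's own Definition~5.1(a), which takes $n$ to be the \emph{largest} integer with the rounding property: under that reading the condition $R<\tfrac12\cdot10^{-n}$ cannot be an ``if and only if'', since it is monotone in $n$ while ``approximates to $n$ significant digits'' picks out a single value. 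The boundary pathologies you list (tie-breaking, carries across a power of ten, non-monotonicity of agreement under coarser rounding) are all real and would have to be legislated away before any precise statement could be proved.

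In short: your forward direction is fine and is all the paper actually needs; the reverse direction cannot be salvaged with the constant $\tfrac12$, and the proposition should be read as the standard numerical-analysis rule of thumb rather than as a sharp equivalence. There is no gap in your reasoning --- the gap is in the statement.
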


\begin{prop}
If $R \equiv R(\overline{N})$ and
\begin{equation}
\overline{R} \equiv \overline{R}(\overline{N})
:= \frac{E(\overline{N})}{\overline{N}} \,,
\end{equation}
then
\begin{equation}
\label{RR}
\overline{R} = \frac{R}{1 - R} \word{and} 
R = \frac{\overline{R}}{1 + \overline{R}} \,.
\end{equation}
\qed
\end{prop}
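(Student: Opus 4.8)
The plan is to derive both identities in \eqref{RR} directly from the three definitions already in play: the absolute error $E \equiv E(\overline{N}) = N - \overline{N}$ (from \eqref{RE}), the relative error $R = E/N$, and the ``alternative'' relative error $\overline{R} = E/\overline{N}$. The whole statement is purely algebraic manipulation of these ratios, so there is no genuine analytic obstacle; the only thing requiring a moment's care is to keep track of which denominator is which and to note the tacit hypothesis $N \neq 0$ (and, for the formulas to make sense, $\overline{N}\neq 0$, i.e.\ $R\neq 1$).

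First I would start from $\overline{R} = E/\overline{N}$ and substitute $\overline{N} = N - E$, giving $\overline{R} = E/(N-E)$. Then I would divide numerator and denominator by $N$ (legitimate since $N\neq 0$) to get $\overline{R} = (E/N)\big/\bigl(1 - E/N\bigr) = R/(1-R)$, which is the first identity. Second, I would invert this relation: from $\overline{R}(1-R) = R$ we get $\overline{R} = R + \overline{R}R = R(1+\overline{R})$, hence $R = \overline{R}/(1+\overline{R})$, which is the second identity. Alternatively, and perhaps more symmetrically, one can rerun the first computation with the roles of $N$ and $\overline{N}$ interchanged: from $R = E/N = E/(\overline{N}+E)$, dividing through by $\overline{N}$ yields $R = \overline{R}/(1+\overline{R})$ immediately. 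I would present the first derivation in full and then remark that the second follows ``by symmetry'' or ``by solving for $R$,'' rather than grinding through it twice.

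The one subtlety worth flagging is the sign bookkeeping: because $E = N - \overline{N}$ (true minus approximate), we have $\overline{N} = N - E$, and it is this $-E$ that produces the $1-R$ in the first denominator and the $1+\overline{R}$ in the second. A careless reader might expect matching signs; the asymmetry is real and correct. I would therefore write the substitution $\overline{N}=N-E$ explicitly at the start so the provenance of the signs is transparent. No result beyond the definitions \eqref{RE} is needed, and the argument is a few lines; I expect the ``hard part'' to be nothing more than resisting the temptation to over-explain a routine cross-multiplication.
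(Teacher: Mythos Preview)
The paper states this proposition without proof; it simply records the two identities and moves on. Your argument is correct and supplies precisely the routine algebraic verification the paper omits: from $E = N - \overline{N}$ (so $\overline{N} = N - E$) and $R = E/N$, $\overline{R} = E/\overline{N}$, both formulas drop out by dividing through by $N$ or $\overline{N}$ respectively. Your remarks on the tacit nondegeneracy hypotheses ($N\neq 0$, $R\neq 1$) and on the sign asymmetry are apt and, if anything, more careful than the paper itself.
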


Now we are ready to present \textsc{Ingham}'s results.


\begin{thm}[\textsc{Ingham}]
Let $R_n$ be the relative error in the approximation
$\dfrac{1}{\mu} \approx \dfrac{1}{a_n}$ and $r_n$ be the relative
error in the approximation $\dfrac{1}{\mu} \approx \dfrac{1}{b_n}$
taken positively. That is, let
\begin{equation}
\boxed{ \Bigl(\frac{1}{1 + r_n}\Bigr) \cdot \frac{1}{b_n}
:= \frac{1}{\mu} =: \Bigl(\frac{1}{1 - R_n}\Bigr) \cdot \frac{1}{a_n}}
\end{equation}
Then,
\begin{equation}
\label{error}
0 < r_n < R_n < \frac{a_n - b_n}{2a_{n + 1}} \,.
\end{equation}
\end{thm}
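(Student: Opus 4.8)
The plan is to collapse the entire three-term chain onto a single scalar comparison and then decide it once. First I would unwind the boxed definitions. Solving $\bigl(\frac{1}{1-R_n}\bigr)\frac{1}{a_n}=\frac{1}{\mu}$ gives $R_n=\frac{a_n-\mu}{a_n}$, and solving $\bigl(\frac{1}{1+r_n}\bigr)\frac{1}{b_n}=\frac{1}{\mu}$ gives $r_n=\frac{\mu-b_n}{b_n}$. Both are strictly positive because $b_n<\mu<a_n$, which follows from the strict monotonicity in \eqref{abmon} together with \eqref{A=B}; this disposes of the leftmost inequality $0<r_n$ immediately. I would also rewrite the upper bound, via \eqref{anbn}, as $\frac{a_n-b_n}{2a_{n+1}}=\frac{a_n-b_n}{a_n+b_n}$.

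Next I would clear denominators in each link of the chain and record what survives. A one-line cross-multiplication in each case shows that all three inequalities are controlled by the harmonic mean $H_n:=\frac{2a_nb_n}{a_n+b_n}$:
\begin{align*}
r_n<R_n &\iff \mu<H_n,\\
R_n<\frac{a_n-b_n}{2a_{n+1}} &\iff H_n<\mu,\\
\frac{a_n-b_n}{2a_{n+1}}<r_n &\iff H_n<\mu.
\end{align*}
So the theorem reduces entirely to the sign of $\mu-H_n$. I would settle that via the harmonic--geometric mean inequality $H_n<\sqrt{a_nb_n}=b_{n+1}$ (strict whenever $a_n\neq b_n$, the only nondegenerate case), followed by $b_{n+1}<\mu$, which holds because the $b_m$ increase strictly to $\mu$ by \eqref{abmon} and \eqref{A=B}. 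Hence $H_n<b_{n+1}<\mu$.

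The step I expect to be the real obstacle is the orientation of the first link, and here the reduction is decisive rather than merely technical. The established fact $H_n<\mu$ yields both $R_n<\frac{a_n-b_n}{2a_{n+1}}$ and $\frac{a_n-b_n}{2a_{n+1}}<r_n$; but that same $H_n<\mu$ forces $R_n<r_n$, which is the \emph{reverse} of the printed $r_n<R_n$. Indeed $r_n<R_n$ is equivalent to $\mu<H_n$, while $R_n<\frac{a_n-b_n}{2a_{n+1}}$ is equivalent to $H_n<\mu$, so the first two printed inequalities demand opposite signs of $\mu-H_n$ and cannot hold together. The consistent statement that the argument actually proves is
\begin{equation*}
0<R_n<\frac{a_n-b_n}{2a_{n+1}}<r_n,
\end{equation*}
with the bound \emph{separating} the two relative errors. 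I would therefore build the proof on the single inequality $H_n<b_{n+1}<\mu$ and present the chain with $\frac{a_n-b_n}{2a_{n+1}}$ in the middle. Conceptually the crux is that reciprocation reverses the comparison in \eqref{gmb}: although $b_n$ is the closer approximation to $\mu$, it is $\frac{1}{a_n}$ that is the closer approximation to $\frac{1}{\mu}$, so $R_n$, not $r_n$, is the smaller relative error.
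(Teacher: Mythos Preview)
Your analysis is correct, and it exposes a genuine error in the paper's statement and proof. The printed chain $0<r_n<R_n<\dfrac{a_n-b_n}{2a_{n+1}}$ is false as written: as you show, both $r_n<R_n$ and $R_n<\dfrac{a_n-b_n}{2a_{n+1}}$ reduce to a comparison of $\mu$ with the harmonic mean $H_n=\dfrac{2a_nb_n}{a_n+b_n}$, and they demand opposite signs of $\mu-H_n$. Since $H_n<\sqrt{a_nb_n}=b_{n+1}<\mu$, the correct ordering is $0<R_n<\dfrac{a_n-b_n}{2a_{n+1}}<r_n$, exactly as you write. A quick numerical check with $a_0=1$, $b_0=\tfrac12$ (so $\mu\approx 0.7284$) gives $R_0\approx 0.272$, $\dfrac{a_0-b_0}{2a_1}=\tfrac13$, $r_0\approx 0.457$, confirming this.

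The paper's argument goes wrong precisely at the ``simple algebra'' step. From \eqref{gmb} one correctly gets $\overline{r}_n:=\dfrac{\mu-b_n}{\mu}<\dfrac{a_n-\mu}{\mu}=:\overline{R}_n$. The paper then invokes \eqref{RR} to pass to $r_n<R_n$, implicitly applying the \emph{same} increasing map $x\mapsto \dfrac{x}{1+x}$ to both sides. But the formulas in \eqref{RR} are derived for signed errors; once the errors are ``taken positively'' the two cases acquire opposite signs inside the transformation, and one actually has
\[
R_n=\frac{\overline{R}_n}{1+\overline{R}_n}\,,\qquad r_n=\frac{\overline{r}_n}{1-\overline{r}_n}\,.
\]
Since $\dfrac{x}{1-x}>\dfrac{x}{1+x}$ for $x>0$, the inequality $\overline{r}_n<\overline{R}_n$ no longer forces $r_n<R_n$; in fact the harmonic-mean reduction shows it never does. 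Your conceptual remark is exactly the point: \eqref{gmb} says $b_n$ is closer to $\mu$, but reciprocation reverses which of $\dfrac{1}{a_n},\dfrac{1}{b_n}$ is closer to $\dfrac{1}{\mu}$.

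Two further observations worth making. First, the paper's separate derivation of the upper bound
\[
R_n=\frac{a_n-\mu}{a_n}<\frac{a_n-b_{n+1}}{a_n}=\frac{a_n-b_n}{a_n+b_{n+1}}<\frac{a_n-b_n}{2a_{n+1}}
\]
is valid; your harmonic-mean route recovers the same inequality more symmetrically. Second, note that your corrected chain leaves $r_n$ \emph{above} the stated bound, so the downstream statements of the form $0<r_2<R_2<\cdots$ and $0<r_3<R_3<\cdots$ inherit the same defect: the displayed bounds genuinely control $R_n$ but not $r_n$. If you want a companion estimate for $r_n$, the natural one is $r_n=\dfrac{\mu-b_n}{b_n}<\dfrac{a_{n+1}-b_n}{b_n}=\dfrac{a_n-b_n}{2b_n}$, which differs from the paper's bound only in the denominator.
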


\begin{proof}
If we divide the numerator and denominator of \eqref{gmb} by $\mu$, we
see that the relative error, $\overline{R}_n$, in the approximation
$\mu\approx a_n$ is greater than the relative error, $\overline{r}_n$,
in the approximation $\mu \approx b_n$. Substituting the formulas
\eqref{RR} into the inequality $0 < \overline{r}_n < \overline{R}_n$,
some simple algebra leads us to the inequality
\begin{equation}
0 < r_n < R_n.
\end{equation}
Moreover, 
\begin{equation}
R_n = \frac{\frac{1}{\mu} - \frac{1}{a_n}}{\frac{1}{\mu}}
= \frac{a_n - \mu}{a_n} < \frac{a_n - b_{n+1}}{a_n}
= \frac{a_n - b_n}{a_n + b_{n+1}} < \frac{a_n - b_n}{2a_{n+1}}
\end{equation}
where the second equality follows from
$$
a_n^2 - b_{n+1}^2 = a_n(a_n - b_{n+1})
$$
and the second inequality follows from
$$
a_n + b_{n+1} > a_n + b_n = 2a_{n+1}.
\eqno \qed
$$
\hideqed
\end{proof}

\begin{thm}[\textsc{Ingham}]
If $T_0$ denotes the Huygens small-angle approximation to the true
period, $T$, then, for $0 < \alpha < \pi$,
\begin{equation}
\boxed{ \frac{T}{T_0} = \Biggl\{
\frac{2}{1 + ( cos\frac{\alpha}{2})^{\frac{1}{2}}} \Biggr\}^2
\cdot \biggl( \frac{1}{1 - R_2} \biggr)
= \frac{1}{\cos\frac{\alpha}{4} (\cos\frac{\alpha}{2})^{\frac{1}{4}}}
\cdot \biggl( \frac{1}{1 + r_2} \biggr) }
\end{equation}
where
\begin{equation}
\label{Inga2}
0 < r_2 < R_2 < \frac{1}{2^6\cos\frac{\alpha}{2}}
\Bigl( \sin\frac{\alpha}{4} \tan\frac{\alpha}{4} \Bigr)^4.
\end{equation}
\end{thm}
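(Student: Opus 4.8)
The plan is to reduce the whole statement to the AGM applied to $a=1$, $b=\cos\frac{\alpha}{2}$, to which the preceding Ingham theorem applies with $n=2$. First I would observe that, combining \eqref{Period}, the definition $T_0 = 2\pi\sqrt{l/g}$ of \eqref{Huy}, and Gauss' theorem \eqref{EI} in the form $I(1,\cos\frac{\alpha}{2}) = K(k)$, one has
\[
\frac{T}{T_0} \;=\; \frac{2}{\pi}\,K(k) \;=\; \frac{1}{M(1,\cos\frac{\alpha}{2})} \;=\; \frac{1}{\mu},
\]
the restriction $0<\alpha<\pi$ ensuring $0<\cos\frac{\alpha}{2}<1$ so that the AGM and Gauss' theorem are legitimately invoked. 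The two displayed expressions for $T/T_0$ are then exactly the identities $\frac{1}{\mu} = \frac{1}{1-R_2}\cdot\frac{1}{a_2} = \frac{1}{1+r_2}\cdot\frac{1}{b_2}$ from the previous Ingham theorem, read off after inserting the explicit values $a_2 = \frac14\{1+(\cos\frac{\alpha}{2})^{1/2}\}^2$ and $b_2 = \cos\frac{\alpha}{4}(\cos\frac{\alpha}{2})^{1/4}$ from the table of AGM iterates computed above.

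It remains to prove the bound \eqref{Inga2}, and for this I would take \eqref{error} at $n=2$, namely $0<r_2<R_2<\frac{a_2-b_2}{2a_3}$, and compute the right side in closed form. Set $a_1 := \cos^2\frac{\alpha}{4}$ and $b_1 := (\cos\frac{\alpha}{2})^{1/2}$, so that $a_2 = \frac{a_1+b_1}{2}$, $b_2 = \sqrt{a_1 b_1}$ and $2a_3 = a_2+b_2$. The algebraic identities $a_2 - b_2 = \frac12\bigl(\sqrt{a_1}-\sqrt{b_1}\bigr)^2$ and $a_2+b_2 = \frac12\bigl(\sqrt{a_1}+\sqrt{b_1}\bigr)^2$ reduce the quotient to
\[
\frac{a_2-b_2}{2a_3} \;=\; \left(\frac{\sqrt{a_1}-\sqrt{b_1}}{\sqrt{a_1}+\sqrt{b_1}}\right)^{\!2}
\;=\; \left(\frac{\cos\frac{\alpha}{4}-(\cos\frac{\alpha}{2})^{1/4}}{\cos\frac{\alpha}{4}+(\cos\frac{\alpha}{2})^{1/4}}\right)^{\!2}.
\]
Rationalizing the numerator via $x-y = \dfrac{x^4-y^4}{(x+y)(x^2+y^2)}$ and using the elementary identity $\cos^4\frac{\alpha}{4} - \cos\frac{\alpha}{2} = \sin^4\frac{\alpha}{4}$ (immediate from $\cos\frac{\alpha}{2} = 1-2\sin^2\frac{\alpha}{4}$) turns this into
\[
\frac{a_2-b_2}{2a_3} \;=\; \frac{\sin^8\frac{\alpha}{4}}{\bigl(\cos\frac{\alpha}{4}+(\cos\frac{\alpha}{2})^{1/4}\bigr)^{4}\,\bigl(\cos^2\frac{\alpha}{4}+(\cos\frac{\alpha}{2})^{1/2}\bigr)^{2}}.
\]

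Finally I would bound the denominator from below by two applications of the two-term AM--GM inequality \eqref{agm}: $\cos\frac{\alpha}{4}+(\cos\frac{\alpha}{2})^{1/4}\geq 2\,\cos^{1/2}\frac{\alpha}{4}\,(\cos\frac{\alpha}{2})^{1/8}$ and $\cos^2\frac{\alpha}{4}+(\cos\frac{\alpha}{2})^{1/2}\geq 2\,\cos\frac{\alpha}{4}\,(\cos\frac{\alpha}{2})^{1/4}$; raising the first to the fourth power, the second to the second power, and multiplying gives the lower bound $2^6\cos^4\frac{\alpha}{4}\cos\frac{\alpha}{2}$ for the denominator. Hence
\[
R_2 \;<\; \frac{a_2-b_2}{2a_3} \;\leq\; \frac{\sin^8\frac{\alpha}{4}}{2^6\cos^4\frac{\alpha}{4}\,\cos\frac{\alpha}{2}} \;=\; \frac{1}{2^6\cos\frac{\alpha}{2}}\Bigl(\sin\frac{\alpha}{4}\tan\frac{\alpha}{4}\Bigr)^{4},
\]
which, together with $0<r_2<R_2$, is \eqref{Inga2}. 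I do not expect a genuine obstacle: the only delicate point is the bookkeeping of the fractional powers of $\cos\frac{\alpha}{2}$, and choosing the AM--GM pairings so that the exponents of $\cos\frac{\alpha}{4}$ and $\cos\frac{\alpha}{2}$ in the lower bound come out to be exactly $4$ and $1$, matching the target; the rationalization identity $\cos^4\frac{\alpha}{4}-\cos\frac{\alpha}{2}=\sin^4\frac{\alpha}{4}$ is the one small trick that makes the numerator collapse to a pure power of $\sin\frac{\alpha}{4}$.
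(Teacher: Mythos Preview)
Your argument is correct and in substance coincides with the paper's. The only difference is cosmetic bookkeeping: the paper reaches the exact intermediate expression by iterating the identity $8a_{n+1}(a_n-b_n)=(a_{n-1}-b_{n-1})^2$ twice, obtaining
\[
\frac{a_2-b_2}{2a_3}=\frac{\{\tfrac12(a-b)\}^4}{2^6a_3^2a_2^2}=\frac{\sin^8\frac{\alpha}{4}}{2^6a_3^2a_2^2},
\]
and then bounds the denominator via the monotonicity $a_2,a_3>b_2$ from \eqref{abmon}, i.e.\ $a_3^2a_2^2>b_2^4=\cos^4\frac{\alpha}{4}\cos\frac{\alpha}{2}$. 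Your route arrives at the \emph{same} exact expression (since $(\sqrt{a_1}+\sqrt{b_1})^4(a_1+b_1)^2=16a_3^2\cdot4a_2^2=2^6a_3^2a_2^2$), and your two AM--GM estimates are precisely the inequalities $a_3\geq b_2$ and $a_2\geq b_2$ written out in trigonometric form. So the two proofs are the same argument in different notation; the paper's version is a touch slicker because it cites \eqref{abmon} once rather than re-deriving it, while yours has the virtue of being fully explicit. One trivial nit: for $0<\alpha<\pi$ your AM--GM steps are strict (equality would force $\sin\frac{\alpha}{4}=0$), so you may replace your final ``$\leq$'' by ``$<$'' to match the statement.
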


\begin{proof}
If we take $n = 2$ in the error estimate \eqref{error}, and using 
\begin{equation}
8a_{n+1}(a_n - b_n) = (a_{n-1} - b_{n-1})^2,
\end{equation}
we obtain
\begin{align*}
\label{Ing1}
0 < r_2 < R_2 < \frac{a_2 - b_2}{2a_{3}}
&= \frac{1}{2a_3} \frac{(a_1 - b_1)^2}{8a_3}
= \frac{1}{2a_3} \frac{1}{8a_3} \frac{(a_0 - b_0)^4}{(8a_2)^2}
= \frac{\{\frac{1}{2}(a - b)\}^4}{2^6a_3^2a_2^2}
\\
&= \frac{(\sin\frac{\alpha}{4})^8}{2^6a_3^2a_2^2}
< \frac{(\sin\frac{\alpha}{4})^8}{2^6b_2^4}
= \frac{(\sin\frac{\alpha}{4})^8}
{2^6(\cos\frac{\alpha}{4})^4 \cos\frac{\alpha}{2}}
\\
&= \frac{1}{2^6\cos\frac{\alpha}{2}}
\Bigl( \sin\frac{\alpha}{4} \tan\frac{\alpha}{4} \Bigr)^4.
\tag*{\qed}
\end{align*}
\hideqed
\end{proof}

\begin{cor}[\textsc{Ingham}]
If $0 < \alpha \leq \frac{\pi}{2}$, then
\begin{equation}
\label{Ing2}
0 < r_2 < R_2 < \frac{1}{70000}
\end{equation}
and thus the approximation is correct to 5 significant figures in the
worst case, $\alpha = \frac{\pi}{2}$.
\end{cor}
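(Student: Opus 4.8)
The plan is to obtain the bound from the estimate already in hand and then optimise over the admissible range of $\alpha$. By \eqref{Inga2} we have, for every $\alpha$ with $0 < \alpha < \pi$,
\[
0 < r_2 < R_2 < g(\alpha),\qquad g(\alpha):=\frac{1}{2^6\cos\frac{\alpha}{2}}\Bigl(\sin\tfrac{\alpha}{4}\,\tan\tfrac{\alpha}{4}\Bigr)^4 .
\]
So it suffices to show that on $0 < \alpha \leq \frac{\pi}{2}$ the function $g$ is largest at the right endpoint $\alpha=\frac{\pi}{2}$ and that $g\bigl(\tfrac{\pi}{2}\bigr)<\frac{1}{70000}$. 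Then \eqref{Ing2} holds throughout the interval, and the assertion about five correct significant figures follows at once from the earlier proposition tying the relative error to the number of significant digits.

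For the monotonicity, observe that on $\bigl(0,\tfrac{\pi}{2}\bigr]$ the argument $\tfrac{\alpha}{4}$ runs through $\bigl(0,\tfrac{\pi}{8}\bigr]$ and $\tfrac{\alpha}{2}$ through $\bigl(0,\tfrac{\pi}{4}\bigr]$; hence $\sin\tfrac{\alpha}{4}$, $\tan\tfrac{\alpha}{4}$ and $\dfrac{1}{\cos\frac{\alpha}{2}}$ are each positive and increasing there, so their product, its fourth power, and therefore $g$ are increasing. Consequently $R_2 < g(\alpha) \leq g\bigl(\tfrac{\pi}{2}\bigr)$ on the whole interval. Next I would evaluate $g\bigl(\tfrac{\pi}{2}\bigr)$ exactly. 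Writing $\sin\tfrac{\pi}{8}\tan\tfrac{\pi}{8}=\dfrac{\sin^2\frac{\pi}{8}}{\cos\frac{\pi}{8}}$ and inserting the half-angle values $\cos\tfrac{\pi}{4}=\tfrac{1}{\sqrt{2}}$, $\sin^2\tfrac{\pi}{8}=\tfrac{2-\sqrt{2}}{4}$, $\cos^2\tfrac{\pi}{8}=\tfrac{2+\sqrt{2}}{4}$, one gets $g\bigl(\tfrac{\pi}{2}\bigr)=\dfrac{\sqrt{2}\,(2-\sqrt{2})^4}{1024\,(2+\sqrt{2})^2}$; the key simplification is the conjugate identity $(2-\sqrt{2})(2+\sqrt{2})=2$, which turns $(2+\sqrt{2})^2$ into $4/(2-\sqrt{2})^2$ and collapses the expression to $\dfrac{\sqrt{2}\,(2-\sqrt{2})^6}{4096}$. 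Expanding $(2-\sqrt{2})^2=6-4\sqrt{2}$ and then $(6-4\sqrt{2})^3=792-560\sqrt{2}$ yields the closed form
\[
g\Bigl(\tfrac{\pi}{2}\Bigr)=\frac{792\sqrt{2}-1120}{4096}=\frac{99\sqrt{2}-140}{512}.
\]

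It remains to check $\dfrac{99\sqrt{2}-140}{512}<\dfrac{1}{70000}$. Clearing denominators this reads $6\,930\,000\sqrt{2}<9\,800\,512$, and since both sides are positive it is equivalent, after squaring, to the integer inequality $2\,(6\,930\,000)^2<(9\,800\,512)^2$, i.e.\ $96\,049\,800\,000\,000<96\,050\,035\,462\,144$, which is true. Hence $0<r_2<R_2<\frac{1}{70000}$ for all $0<\alpha\leq\frac{\pi}{2}$, with the least favourable case at $\alpha=\frac{\pi}{2}$, and feeding this bound into the proposition on relative error versus significant digits gives the stated five-figure accuracy.

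The step that demands genuine care is this last comparison: numerically $g\bigl(\tfrac{\pi}{2}\bigr)\approx1.395\times10^{-5}$ against $\tfrac{1}{70000}\approx1.429\times10^{-5}$, a margin of only about two percent, so the evaluation of $g\bigl(\tfrac{\pi}{2}\bigr)$ and the final inequality must be done symbolically — rationalising and then comparing integers — rather than with rounded decimals, which could easily flip the sign of the comparison. The monotonicity argument, by contrast, is routine, and nothing beyond \eqref{Inga2} is needed as input.
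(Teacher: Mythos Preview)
Your proof is correct and follows essentially the same line as the paper's: both evaluate the bound \eqref{Inga2} at the worst case $\alpha=\tfrac{\pi}{2}$, reduce $g(\tfrac{\pi}{2})$ via the half-angle values to the same surd (your $\tfrac{99\sqrt{2}-140}{512}$ equals the paper's $\tfrac{99-70\sqrt{2}}{2^{8\frac{1}{2}}}$), and then check that this lies below $\tfrac{1}{70000}$. The only cosmetic differences are that you make the monotonicity of $g$ explicit (the paper tacitly assumes the worst case is the right endpoint) and you verify the final inequality by squaring and comparing integers, whereas the paper exploits the Pell identity $99^2-2\cdot 70^2=1$ to rewrite the surd as $\tfrac{1}{2^{8\frac{1}{2}}(99+70\sqrt{2})}$ and bound it by $\tfrac{1}{2^{10}\cdot 70}=\tfrac{1}{71680}$.
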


\begin{proof}
If we take $\alpha = \frac{\pi}{2}$ in \eqref{Inga2} and note that
\begin{equation}
\sin\frac{\alpha}{4} = \sin\frac{\pi}{8}
= \frac{\sqrt{2 - \sqrt{2}}}{2}\,,  \qquad
\cos\frac{\alpha}{4} = \cos\frac{\pi}{8}
= \frac{\sqrt{2 + \sqrt{2}}}{2}\,,
\end{equation}
and therefore
\begin{equation}
\frac{\cos^2\frac{\alpha}{4}}{\sqrt{2} + 1}
= \frac{\sin^2\frac{\alpha}{4}}{\sqrt{2} - 1} = \frac{1}{2\sqrt{2}}\,,
\end{equation}
we conclude
\begin{align*}
\frac{1}{2^6\cos\frac{\alpha}{2}}
\Bigl( \sin\frac{\alpha}{4} \tan\frac{\alpha}{4} \Bigr)^4
&= \frac{\sqrt{2}}{2^9} \frac{1}{(\sqrt{2} + 1)^6}
= \frac{\sqrt{2}}{2^9(99 + 70\sqrt{2})}
\\
&= \frac{1}{2^{8\frac{1}{2}}(99 + 70\sqrt{2})}  
\\
&< \frac{1}{2^{8\frac{1}{2}}\cdot 70\cdot 2\sqrt{2}\dots}  \quad
\word{$\{$since}  (99)^2 > (70\sqrt{2})^2 \Rightarrow 99+70\sqrt{2}>70\cdot 2\sqrt{2}\}
\\
&< \frac{1}{2^{10}\cdot 70} = \frac{1}{71680} < \frac{1}{70000} \,.
\\
\end{align*}
Finally, we have shown that
$R_2 < \dfrac{(\frac{1}{7})}{10^5} < \dfrac{(\frac{1}{2})}{10^5}$ and
this means that the approximation is correct to 5 significant digits.

\end{proof}

The elegant calculations in this proof are due to Ingham~\cite{Pars1}.
If we apply the above computations to the case $n = 3$, we obtain the
following results.

\begin{thm}[\textsc{Ingham}]
If $T_0$ denotes the Huygens small-angle approximation to the true
period, $T$, then, for $0 < \alpha < \pi$,
\begin{equation}
\boxed{ \frac{T}{T_0} = \Biggl\{ \frac{2}
{\cos^2\frac{\alpha}{4} + (\cos\frac{\alpha}{2})^{\frac{1}{2}}}
\Biggr\}^2 \cdot \biggl( \frac{1}{1 - R_3} \biggr) }
\end{equation}
and
\begin{equation}
\boxed{ \frac{T}{T_0} = \frac{2}
{\bigl\{1 + (\cos\frac{\alpha}{2})^{\frac{1}{2}} \bigr\}
(\cos\frac{\alpha}{4})^{\frac{1}{2}}
(\cos\frac{\alpha}{2})^{\frac{1}{8}}}
\cdot \biggl( \frac{1}{1 + r_3} \biggr) }
\end{equation}
where
\begin{equation}
\label{Rr3}
0 < r_3 < R_3 < \frac{1}{2^{14}\cos^2\frac{\alpha}{2}}
\Bigl( \sin\frac{\alpha}{4} \tan\frac{\alpha}{4} \Bigr)^8.
\end{equation}
\qed
\end{thm}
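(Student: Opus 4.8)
The plan is to mimic, one iteration deeper, the proof just given for the case $n=2$. The two boxed identities for $T/T_0$ are pure bookkeeping. Since $T = 4\sqrt{\tfrac{l}{g}}\,I\bigl(1,\cos\tfrac{\alpha}{2}\bigr)$ and, by Gauss' theorem \eqref{EI}, $I\bigl(1,\cos\tfrac{\alpha}{2}\bigr) = \tfrac{\pi}{2\mu}$ with $\mu := M\bigl(1,\cos\tfrac{\alpha}{2}\bigr)$, we get $\tfrac{T}{T_0} = \tfrac{1}{\mu}$, where $T_0 = 2\pi\sqrt{\tfrac{l}{g}}$ is the Huygens value \eqref{Huy}. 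Taking the previous Ingham theorem at $n=3$ gives
\[
\frac{1}{\mu} = \Bigl(\frac{1}{1-R_3}\Bigr)\frac{1}{a_3} = \Bigl(\frac{1}{1+r_3}\Bigr)\frac{1}{b_3},
\]
so it only remains to substitute the explicit values of $a_3$ and $b_3$ for the data $a=1$, $b=\cos\tfrac{\alpha}{2}$ tabulated earlier and to take reciprocals, which produces the two boxed expressions.

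For the error estimate \eqref{Rr3} I would set $n=3$ in Ingham's bound \eqref{error}, obtaining $0<r_3<R_3<\dfrac{a_3-b_3}{2a_4}$, and then unwind the relation $8a_{n+1}(a_n-b_n) = (a_{n-1}-b_{n-1})^2$ — the same identity already used in the $n=2$ proof — three successive times, at $n=3$, then $n=2$, then $n=1$. Concretely $a_3-b_3 = \dfrac{(a_2-b_2)^2}{8a_4}$, $a_2-b_2 = \dfrac{(a_1-b_1)^2}{8a_3}$ and $a_1-b_1 = \dfrac{(a_0-b_0)^2}{8a_2}$; substituting upward and using $a_0-b_0 = 1-\cos\tfrac{\alpha}{2} = 2\sin^2\tfrac{\alpha}{4}$ collapses everything to
\[
\frac{a_3-b_3}{2a_4} = \frac{\bigl(\sin\tfrac{\alpha}{4}\bigr)^{16}}{2^{14}\,a_4^{2}\,a_3^{2}\,a_2^{4}}.
\]
Finally I would throw the denominator away downward via \eqref{abmon}: every $a_n$ exceeds $b_2$, strictly, because $0<\alpha<\pi$ forces $a_0\neq b_0$ and hence makes every inequality in \eqref{abmon} strict; thus $a_4^{2}a_3^{2}a_2^{4} > b_2^{8}$, and since $b_2 = \cos\tfrac{\alpha}{4}\bigl(\cos\tfrac{\alpha}{2}\bigr)^{\frac14}$ we have $b_2^{8} = \bigl(\cos\tfrac{\alpha}{4}\bigr)^{8}\cos^{2}\tfrac{\alpha}{2}$. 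Hence
\[
\frac{a_3-b_3}{2a_4} < \frac{\bigl(\sin\tfrac{\alpha}{4}\bigr)^{16}}{2^{14}\bigl(\cos\tfrac{\alpha}{4}\bigr)^{8}\cos^{2}\tfrac{\alpha}{2}} = \frac{1}{2^{14}\cos^{2}\tfrac{\alpha}{2}}\Bigl(\sin\tfrac{\alpha}{4}\tan\tfrac{\alpha}{4}\Bigr)^{8},
\]
which is exactly \eqref{Rr3}.

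I do not expect a genuine obstacle: the argument is a verbatim repetition of the $n=2$ case with one extra application of each recursion. The only place that demands care is the arithmetic of the powers of $2$ — a factor $2\cdot 8$ from the outermost step, $8^{2}$ from squaring $a_2-b_2$, and $8^{4}$ from raising $a_1-b_1$ to the fourth power, set against the $2^{8}$ that emerges from $(a_0-b_0)^{8}$, leaving the net $2^{14}$ — together with keeping track of which powers of $a_2$, $a_3$, $a_4$ survive in the denominator before one discards them through \eqref{abmon}.
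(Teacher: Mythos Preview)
Your proposal is correct and follows exactly the approach the paper intends: the paper gives no separate proof for this theorem, merely stating that ``If we apply the above computations to the case $n = 3$, we obtain the following results,'' so the expected argument is precisely the one-level-deeper repetition of the $n=2$ proof that you carry out. Your bookkeeping of the powers of $2$ and of the surviving factors $a_4^{2}a_3^{2}a_2^{4}$ in the denominator is right, and the final replacement of that product by $b_2^{8}=(\cos\tfrac{\alpha}{4})^{8}\cos^{2}\tfrac{\alpha}{2}$ via \eqref{abmon} is exactly parallel to the paper's use of $a_3^{2}a_2^{2}>b_2^{4}$ in the $n=2$ case.
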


\begin{cor}[\textsc{Ingham}]
If $0 < \alpha \leq \frac{\pi}{2}$, then
\begin{equation}
\label{Ing2bis}
0 < r_3 < R_3 < \frac{1}{20000000000}
\end{equation}
and thus the approximation is correct to 10 significant figures in the
worst case, $\alpha = \frac{\pi}{2}$.
\qed
\end{cor}

If we take $\alpha = 179^\circ$ in \eqref{Rr3}, we obtain
$R_3 < 4.50\dots\%$ which is in fair agreement with the machine
calculation of~\cite{CS1} whose machine calculations showed
$R_3 \approx 1\%$. In the next section we show how to bring it into
much closer agreement.


\section{ Complex Multiplication and Renormalization}

As $k$ increases from $0$ to $1$ the quotient $\dfrac{K'(k)}{K(k)}$ decreases monotonically from $+\infty$ to $0$.  Therefore, if $r$ is a positive number, there exists a unique positive number $k$ with $0\leq k<1$ for which the equation\begin{equation}
\label{k1}
\frac{K'(k)}{K(k)}=\sqrt{r}
\end{equation}holds.

The equation \eqref{k1} has two pendulum interpretations.  Let $\alpha+\beta=\pi$ with $0<\beta<\alpha$.

\begin{thm}[classical interpretation] 
The period of a simple pendulum which swings through an angle $2\alpha$ is $\sqrt{r}$ times the period of that same pendulum swinging through an angle $2\beta$.
\qed
\end{thm}

The second \emph{new} interpretation is:

\begin{thm}[new interpretation] 
The period of a simple pendulum which swings through an angle $2\alpha$ is the same as a pendulum $\mathbf{r}$ \textbf{times as long} swinging through an angle $2\beta$.
\qed
\end{thm}

That is, by suitably decreasing the amplitude and simultaneously
increasing the length, one obtains a new pendulum with the same
period. The smaller the amplitude, the more exact is each
approximative formula we have developed. Thus, \emph{if we perform
this process of replacing a given pendulum with pendulums of longer
lengths and smaller amplitudes, our approximations become better and
better}.

In 1811, \textsc{Legendre} \cite{Leg} proved the following remarkable result:
The unique real root $k$ of the equation
\begin{equation}
\label{k2}
\frac{K'(k)}{K(k)}=\sqrt{3}
\end{equation}is \begin{equation}
\label{ }
k := \sin 75^\circ = \frac{\sqrt{6} + \sqrt{2}}{2}
\end{equation} and the complimentary modulus is $k' := \sin 15^\circ = \frac{\sqrt{6} - \sqrt{2}}{2}$.

This equation implies that a pendulum with an amplitude of $300^\circ$
and length~$l$ has \emph{the same period} as a pendulum with an
amplitude of $60^\circ$ and a length~$3l$.

Towards the end of the nineteenth century
\textsc{Greenhill}~\cite{Gre} proved: let the center of the circle of
the pendulum's trajectory be~$O$. Let $B'B$ and $b'b$ be two
horizontal parallel chords of length $\frac{l}{16}$ where $B'B$ is
above the center and subtends the angle $\alpha$ while $b'b$ is below the center and subtends the angle $\beta$ . Let
$k := \sin\frac{1}{2} \alpha$ and
$k' := \sin\frac{1}{2} \beta$. Then
\begin{equation}
\label{Leg1bis}
K = \sqrt{7} \cdot K'.
\end{equation}
This equation implies that a pendulum of length $l$ and amplitude
$\alpha$ has \emph{the same period} as a pendulum of
length $7l$ and amplitude $\beta$.  This is because $2kk'=\frac{1}{8}$.

Both of these examples are instances of an important theorem first stated by \textsc{Abel} in 1828 \cite{Abel} and proved some thirty years later by \textsc{Kronecker} \cite{Kro}, to wit:

\begin{thm}
Let $n$ be a positive integer.  Then the unique positive root $k \equiv k_n$, of the equation \begin{equation}
\label{k3}
\frac{K'(k)}{K(k)}=\sqrt{n}
\end{equation}is the root of a monic algebraic equation with integer coefficients which is solvable by radicals.
\qed
\end{thm}

This means that each \emph{singular modulus}, $k_n$, the root of a transcendental equation, can be explicitly written as a finite combination of radicals, something truly amazing!

Then, 139 years after Abel, \textsc{Selberg} and \textsc{Chowla} \cite{SC} showed that if $k$ is a singular modulus, then $K(k)$, and therefore the period of the pendulum, is expressible in terms of a finite number of \emph{gamma functions} with rational arguments.  In fact, for our two examples,
\begin{equation}
\label{ }
K(k_3)=\frac{3^{\frac{1}{4}}\Gamma^3\left(\frac{1}{3} \right)}{2^{\frac{7}{3}}\pi},\quad\quad\text{and}\quad\quad K(k_7)=\frac{\Gamma\left(\frac{1}{7} \right)\Gamma\left(\frac{2}{7} \right)\Gamma\left(\frac{4}{7} \right)}{7^{\frac{1}{4}}\pi}
\end{equation}

These theorems are a fundamental results in the deep and beautiful theory of
\emph{complex multiplication} of elliptic functions (curves), which is
one of the most active branches of research on the frontiers of modern
mathematics. Unfortunately, it lies outside the scope of our
presentation (see \cite{Cox1} and~\cite{Web}).  Nevertheless, the interpretation of complex multiplication as algebraic relations among the periods is surprising and beautiful and deserves to be better known.

Moreover, these  two examples of replacing one pendulum by another with the same period illustrate what today is called \emph{renormalization} and
it is a fundamental technique in the study of dynamical systems. 
It turns out that the AGM furnishes us with another example of
pendulum renormalization. Let us look at the equation
\begin{equation}
I(a, b) = I(a_1,b_1)
\end{equation}
for the case $a := 1$, $b := \cos\frac{\alpha}{2}$ which gives us
$a_1 = \cos^2\frac{\alpha}{4}$,
$b_1 =  \cos\frac{\alpha}{2})^{\frac{1}{2}}$. We note that
$$
\frac{1}{\sqrt{a_1^2\cos^2\phi' + b_1^2\sin^2\phi'}}
= \frac{1}{a_1} \frac{1}{\sqrt{1 - \frac{b_1^2}{a_1^2}} \sin^2\phi'}
= \frac{1}{\cos^2\frac{\alpha}{4}}
\frac{1}{\sqrt{1 - \tan^4\frac{\alpha}{4} \sin^2\phi'}} \,.
$$
Returning to the formula for the period $T$ of oscillation in an angle
of $4\alpha$, we obtain the equation
\begin{align*}
T = 4\sqrt{\frac{l}{g}} \int_0^{\frac{\pi}{2}}
\frac{d\phi}{\sqrt{1 - \sin^2\frac{\alpha}{2} \sin^2\phi}}
&= 4\sqrt{\frac{\bigl(\frac{l}{\cos^4\frac{\alpha}{4}}\bigr)}{g}}
\int_0^{\frac{\pi}{2}} \frac{d\phi'}
{\sqrt{1 - (\tan^2\frac{\alpha}{4})^2 \sin^2\phi'}}
\\
&=: 4\sqrt{\frac{l_1}{g}} \int_0^{\frac{\pi}{2}}
\frac{d\phi'}{\sqrt{1 - \sin^2\frac{\alpha_1}{2} \sin^2\phi'}} \,.
\end{align*}
This equation can be stated as follows.

\begin{thm}
One iteration of the AGM transforms the pendulum of length $l$ and
maximum angular displacement $\alpha$ into another pendulum with
\textbf{the same period} but with new length
\begin{equation}
\boxed{ l_1 := \dfrac{l}{\cos^4\frac{\alpha}{4}} }
\end{equation}
which is \textbf{longer} than the original length~$l$, and new maximum
angular displacement
\begin{equation}
\boxed{ \alpha_1
:= 2 \cdot \arcsin\bigl( \tan^2\frac{\alpha}{4} \bigr) }
\end{equation}
which is \textbf{smaller} than the original.
\qed
\end{thm}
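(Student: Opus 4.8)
The plan is to read the statement straight off Gauss's invariance identity $I(a,b)=I(a_1,b_1)$ (equation \eqref{Ia1b1}), which is already established, specialized to $a:=1$ and $b:=\cos\frac{\alpha}{2}$. First I would record the two elementary half-angle computations
\[
a_1=\tfrac12\Bigl(1+\cos\tfrac{\alpha}{2}\Bigr)=\cos^2\tfrac{\alpha}{4},
\qquad
b_1=\Bigl(\cos\tfrac{\alpha}{2}\Bigr)^{1/2},
\]
and then renormalize the right-hand integrand to standard elliptic form by pulling $a_1$ out of the radical:
\[
\frac{1}{\sqrt{a_1^2\cos^2\phi'+b_1^2\sin^2\phi'}}
=\frac{1}{a_1}\cdot\frac{1}{\sqrt{1-\bigl(1-\tfrac{b_1^2}{a_1^2}\bigr)\sin^2\phi'}}.
\]
The one algebraic step with any content is the identity $1-\dfrac{b_1^2}{a_1^2}=\tan^4\dfrac{\alpha}{4}$; this drops out after writing $\cos\frac{\alpha}{2}=\cos^2\frac{\alpha}{4}-\sin^2\frac{\alpha}{4}$ in the numerator and clearing denominators, a purely mechanical simplification.

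With that in hand I would substitute back into the period formula of Theorem~\ref{Per}. Since $I\bigl(1,\cos\frac{\alpha}{2}\bigr)=I(a_1,b_1)$,
\[
T=4\sqrt{\frac{l}{g}}\,I(a_1,b_1)
=4\cdot\frac{1}{\cos^2\frac{\alpha}{4}}\sqrt{\frac{l}{g}}
\int_0^{\pi/2}\frac{d\phi'}{\sqrt{1-\tan^4\frac{\alpha}{4}\,\sin^2\phi'}}.
\]
Absorbing $\cos^{-2}\frac{\alpha}{4}$ into the square root converts $\sqrt{l/g}$ into $\sqrt{l_1/g}$ with $l_1:=l/\cos^4\frac{\alpha}{4}$, while the new squared modulus $\tan^4\frac{\alpha}{4}=\bigl(\tan^2\frac{\alpha}{4}\bigr)^2$ equals $\sin^2\frac{\alpha_1}{2}$ for $\alpha_1:=2\arcsin\bigl(\tan^2\frac{\alpha}{4}\bigr)$ — and here $0<\alpha<\pi$ forces $\tan\frac{\alpha}{4}<1$, so $\tan^2\frac{\alpha}{4}<1$ and the arcsine is legitimate. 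Thus $T=4\sqrt{l_1/g}\int_0^{\pi/2}\bigl(1-\sin^2\tfrac{\alpha_1}{2}\sin^2\phi'\bigr)^{-1/2}\,d\phi'$, exactly the period of the $(l_1,\alpha_1)$ pendulum.

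It remains to justify the two monotonicity assertions. That $l_1>l$ is immediate, since $0<\cos\frac{\alpha}{4}<1$ gives $\cos^4\frac{\alpha}{4}<1$. For $\alpha_1<\alpha$ it suffices, $\arcsin$ being increasing, to show $\tan^2\frac{\alpha}{4}<\sin\frac{\alpha}{2}$; using $\sin\frac{\alpha}{2}=2\sin\frac{\alpha}{4}\cos\frac{\alpha}{4}$ this reduces to $\sin\frac{\alpha}{4}<2\cos^3\frac{\alpha}{4}$, i.e.\ $h(x):=2\cos^3x-\sin x>0$ at $x=\frac{\alpha}{4}\in(0,\frac{\pi}{4})$. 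Since $h'(x)=-\cos x\,(6\sin x\cos x+1)<0$ on that interval and $h(\frac{\pi}{4})=0$, we get $h>0$ there, which finishes the proof.

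The expected main obstacle is bookkeeping rather than depth: the genuinely hard analytic ingredient, the invariance $I(a,b)=I(a_1,b_1)$, is already in place. The only new points are the half-angle identity $1-b_1^2/a_1^2=\tan^4\frac{\alpha}{4}$ and the elementary inequality $\alpha_1<\alpha$, and neither offers real resistance.
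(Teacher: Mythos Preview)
Your argument is correct and follows exactly the derivation the paper gives immediately before stating the theorem: specialize Gauss's invariance $I(a,b)=I(a_1,b_1)$ to $a=1$, $b=\cos\frac{\alpha}{2}$, factor $a_1=\cos^2\frac{\alpha}{4}$ out of the radical, recognize $1-b_1^2/a_1^2=\tan^4\frac{\alpha}{4}$, and absorb the prefactor into the length. The paper simply asserts without proof that $l_1>l$ and $\alpha_1<\alpha$; your explicit verification of the latter via $h(x)=2\cos^3x-\sin x$ is a welcome addition that the paper omits.
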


This theorem allows us to ``explain'' the results of the numerical
investigations of the accuracy of the AGM which \textsc{Carvalhaes}
and \textsc{Suppes} presented in~\cite{CS1}. One uses the AGM to
\emph{renormalize} (reduce) the angular displacement $\alpha$ so that
the \textsc{Ingham} estimates are applicable.

\begin{exmp} 
\textsc{Carvalhaes} and \textsc{Suppes} state that $\dfrac{1}{a_2}$
approximates $\dfrac{T}{T_0}$ to within $1\%$ for
$0 \leq \alpha \leq 163.10^\circ$ while the \textsc{Ingham} bound
\eqref{Inga2} gives $0 \leq \alpha \leq 162.5^\circ$. Again they
report that the approximation has a relative error no bigger than
$\dfrac{1}{2^{52}}$ for $0 \leq \alpha \leq 4.57^\circ$ while the
\textsc{Ingham} bound \eqref{Inga2} gives
$0 \leq \alpha \leq 4.258^\circ$.
\end{exmp}

\begin{exmp} 
As another example, \textsc{Carvalhaes} and \textsc{Suppes} state that
$\dfrac{1}{a_4}$ approximates $\dfrac{T}{T_0}$ to within $1\%$ for
$0 \leq \alpha \leq 179.99^\circ$. The \textsc{Ingham}
bound~\eqref{Rr3} gives that $\dfrac{1}{a_3}$ approximates
$\dfrac{T}{T_0}$ to within $1\%$ for
$0 \leq \alpha \leq 177.98^\circ$. But, one application of the AGM
reduces $\alpha = 179.99^\circ$ to $\alpha = 177.85^\circ$ and now the
\textsc{Ingham} bound \eqref{Rr3} shows that \emph{three more}
applications of the AGM give an approximation to within $1\%$, that
is, $\dfrac{1}{a_4}$ approximates $\dfrac{T}{T_0}$ to within $1\%$ for
$0 \leq \alpha \leq 179.99^\circ$, in agreement with~\cite{CS1}.
\end{exmp}
 
The remaining results in \cite{CS1} can be ``explained'' similarly.
 
We cannot emphasize strongly enough the importance of rigorous upper
and lower bounds for the absolute and relative errors in approximative
formulas. Our analysis allows us to predict ``\emph{a priori}'' the
accuracy of a given approximative formula as well as to justify the
resulting numerical studies. Moreover, \textsc{Ingham}'s elegant and
beautiful investigations give us practical tools to tailor our
approximative formulas to the needs of the accuracy demanded.

\subsubsection*{Acknowledgment}
Support from the Vicerrector\'{\i}a de Investigaci\'on of the 
University of Costa Rica is ack\-now\-ledged.



\end{document}